\documentclass[review]{elsarticle}
\usepackage{lineno,hyperref}
%\modulolinenumbers[5]

%\journal{Nonlinear Analysis: Real World Applications }

%%%%%%%%%%%%%%%%%%%%%%%
%% Elsevier bibliography styles
%%%%%%%%%%%%%%%%%%%%%%%
%% To change the style, put a % in front of the second line of the current style and
%% remove the % from the second line of the style you would like to use.
%%%%%%%%%%%%%%%%%%%%%%%

%% Numbered
%\bibliographystyle{model1-num-names}

%% Numbered without titles
%\bibliographystyle{model1a-num-names}

%% Harvard
%\bibliographystyle{model2-names.bst}\biboptions{authoryear}

%% Vancouver numbered
%\usepackage{numcompress}\bibliographystyle{model3-num-names}

%% Vancouver name/year
%\usepackage{numcompress}\bibliographystyle{model4-names}\biboptions{authoryear}

%% APA style
%\bibliographystyle{model5-names}\biboptions{authoryear}

%% AMA style
%\usepackage{numcompress}\bibliographystyle{model6-num-names}

%% `Elsevier LaTeX' style
%\bibliographystyle{elsarticle-num}
%%%%%%%%%%%%%%%%%%%%%%%
\usepackage[utf8]{inputenc}
\usepackage{amsthm, amsmath, amsfonts, amssymb}
\usepackage[fleqn]{mathtools} 
\usepackage{csquotes}    
\usepackage{mathrsfs}
\usepackage{graphicx}
\usepackage{xcolor}
\usepackage{verbatim}
\DeclareMathOperator*{\esssup}{ess\,sup}
\newtheorem{lem}{Lemma}[section]
\newtheorem{thm}{Theorem}[section]

\newtheorem{remark}{Remark}

\newtheorem{definition}{Definition}[section]
\newcommand{\mycomment}[1]{}

\newcommand{\br}[2]{\left(#1 , #2\right)}
\newcommand{\nr}[1]{\| #1 \|_{L^2}^2}
\newcommand{\nrm}[1]{\| #1 \|_{L^2}}
\newcommand{\norm}[1]{{\left\lVert#1\right\rVert}_{L^{\infty}(\Omega)}}
\newcommand{\innerproduct}[1]{\left\langle #1 \right\rangle}
\usepackage{multicol}
\usepackage{graphicx}
\usepackage{epstopdf}
\usepackage{caption}
\usepackage{subcaption}
\usepackage{adjustbox}
\usepackage{subcaption}
\usepackage{tikz}
\usepackage{pgffor}
\DeclareUnicodeCharacter{02B9}{'}

\setlength\textwidth{18cm}
\usepackage[a4paper, total={6in, 8in}, margin=1.5 cm]{geometry}
\begin{document}
\begin{frontmatter}
\title{ Global Well-Posedness and Numerical Approximation of a Coupled Darcy-Convection-Diffusion System with Exponential Nonlinearity}  

%Global Existence and Numerical Simulation for a %Coupled Darcy–Convection–Diffusion System with %Exponential Viscosity}

%% Group authors per affiliation:
\author[mymainaddress]{Sahil Kundu}
\author[mysecondaryaddress]{Amiya K. Pani}
\author[mymainaddress]{Manoranjan Mishra}
%\ead{manoranjan@iitrpr.ac.in}

\address[mymainaddress]{Department of Mathematics, Indian Institute of Technology Ropar, Rupnagar, India}
\address[mysecondaryaddress]{Department of Mathematics, BITS Pilani, K K Birla Goa Campus, Goa, India}

\begin{abstract}
This paper investigates density-driven flow in porous media, focusing on the roles of viscosity contrast, density contrast, and linear adsorption. In this setup, the fluid on top is heavier and more viscous than the fluid below. Under the effect of gravity, this system becomes unstable, and finger-like structures appear. The phenomenon is described mathematically by coupling Darcy's law with a convection–diffusion reaction equation. The nonlinearity in this model arises mainly from the concentration  dependence of viscosity and the convective transport term. The existence of a unique pair of weak solutions is shown  using the Galerkin approximation method and truncation technique. Moreover, an application of the maximum principle shows non-negativity of the concentration. Additionally, we analyze the long-time behavior of the solution and prove that the concentration converges exponentially to zero in the $L^p$-norm for all $1 \le p \le \infty$ as $t \to \infty.$ To complement the theoretical analysis, we perform numerical simulations based on a pressure formulation. By tracking total kinetic energy and mixing measures over time, we discuss the instability and the mixing efficiency, respectively. The present study reveals that although increasing the density contrast amplifies the total kinetic energy, the marginal impact diminishes with successive increments of density contrast. Similarly, while adsorption acts to suppress mixing, its efficiency in doing so tends to saturate with further increases. These  behavior are consistent with the numerical simulations.
\end{abstract}
\begin{keyword}
Density fingering, Darcy equation, Miscible flow, Well-posedness, Existence, and Uniqueness
\end{keyword}
\end{frontmatter}

% \documentclass{elsarticle}
% \title{Draft1}
% \author{sahil}
\date{}
% \maketitle
\section*{Nomenclature}
\addcontentsline{toc}{section}{Nomenclature}
{\begin{tabular}{ll}
\textbf{Symbol} & \textbf{Description} \\
\hline
$c$ & Concentration of the solute \\
$\boldsymbol{u}$ & Darcy velocity vector \\
$p$ & Fluid pressure \\
$\tilde{p}$ & Modified (reduced) pressure \\
$K$ & Permeability of the porous medium \\
$k$ & Linear adsorption coefficient \\
$\kappa$ & Reaction coefficient \\
$\mu(c)$ & Concentration-dependent exponential viscosity \\
$\tilde{\mu}(c)$ & Truncated viscosity \\
$\boldsymbol{g}$ & Gravity/Body force vector \\
$\alpha$ & Density contrast parameter \\
$R$ & Viscosity contrast coefficient \\
$\Omega$ & Spatial domain \\
$\partial \Omega$ & Boundary of the domain \\
$T$ & Total simulation time \\
\hline
\end{tabular}
}
\section{Introduction}
The Rayleigh–Taylor instability \cite{rayleigh1882investigation, taylor1950instability} arises when a heavier fluid pushes into a lighter one within a porous medium. Its development is strongly influenced by the mobility contrast between the fluids, especially when their viscosities differ. This form of instability plays a central role in a variety of practical contexts—for instance, in enhanced oil recovery \cite{en11102751,krumrine1982surfactant}, in carbon dioxide storage and sequestration \cite{sayari2010stabilization, huppert2014fluid}, in the transport of contaminants through groundwater \cite{van1988transport, weber1991sorption}, and in chromatographic separation processes \cite{shalliker2007visualising,mayfield2005viscous}. In many of these applications, the solute interacts with the porous matrix, most commonly through adsorption, where part of the solute temporarily binds to the solid structure of the medium \cite{en11102751, sayari2010stabilization,weber1991sorption}. This interaction slows down the solute transport \cite{zhang2022enhanced} and modifies the movement of concentration fronts. When fluid properties such as density and viscosity changes with solute concentration, adsorption can also indirectly alter the flow, thereby affecting both instability and mixing \cite{rana2019influence, hota2015onset, rana2014fingering}. These processes can be described mathematically by a coupled system involving fluid flow and solute transport within a porous medium. The governing equations take the form
\begin{align*}
\nabla \cdot \boldsymbol{u} = 0, \quad
\frac{\mu(c_m)}{K}\boldsymbol{u} = - \nabla p - \rho(c_m)\boldsymbol{g},    
\end{align*}
where $\boldsymbol{u}$ denotes the Darcy velocity, $p$ is the pressure, and $c_m$ is the solute concentration in the mobile (fluid) phase. 
The viscosity $\mu(c_m)$ and density $\rho(c_m)$ depend on the solute concentration. 
The solute undergoes advection, dispersion, and adsorption–desorption exchange with the solid matrix, governed by
\begin{align*}
 \frac{\partial c_m}{\partial t} + \frac{\partial c_s}{\partial t}+ \boldsymbol{u}\cdot\nabla c = D \Delta c  + R, \
\end{align*}
where $c_s$ is the adsorbed concentration (static phase), $D$ is the diffusion coefficient, and $R$ represents the reaction term. The appropriate initial and boundary conditions close the system. Details of the model and its assumptions are presented in Section~\ref{sec:mathematical model}.

Several numerical studies have been carried out to examine how density and viscosity contrasts influence density-driven convective flows using the above model \cite{Pramanik_2015, pramanik2016coupled, daniel2014effect, manickam1995fingering}. In particular, \citet{manickam1995fingering} investigated fingering instabilities in vertical miscible displacement flows through porous media arising from combined viscosity and density contrasts, employing linear stability analysis along with direct numerical simulations. They perform linear stability analysis by Quasi-Steady-State Approximations and non-linear simulations by  Hartley-transform-based pseudospectral method. Their results showed that the flow may develop a potentially stable region followed downstream by a potentially unstable region, or vice versa, depending on the flow velocity and the diffused viscosity and density profiles. This behaviour gives rise to the possibility of so-called “reverse” fingering. Subsequently, \citet{daniel2014effect} reported that the instability is significantly enhanced when a denser, less viscous fluid displaces a lighter, more viscous one in the direction of gravity. Further, \citet{pramanik2016coupled} studied the coupled effects of viscosity and density gradients on fingering instabilities of a miscible slice in vertical porous media and identified multiple instability regimes characterized by distinct combinations of viscosity and density contrasts. 

While these computational studies offer valuable insights, they lack a rigorous discussion on the well-posedness of the underlying mathematical model. In literature, some progress has been made on the well-posedness of miscible displacement in porous media. Foundational work by Amirat et al. \cite{amirat2007global} established the existence of global weak solutions for compressible miscible flows; however, their analysis was strictly limited to the one-dimensional setting. Furthermore, while Chen et al. \cite{chen1999mathematical} developed comprehensive mathematical frameworks for general reservoir models, their results generally rely on the assumption that the fluid properties (such as viscosity) are bounded and globally Lipschitz continuous. This assumption excludes physically critical scenarios where viscosity exhibits strong nonlinear growth. Building on these works, Allali et al. proved the existence and uniqueness of weak solutions for incompressible flows with constant viscosity and permeability
\cite{allali2015model}, while Kundu et al. established well-posedness for a model using the Brinkman equation and a concentration-dependent permeability with constant viscosity \cite{KUNDU2024128532}. However, to our knowledge, no well-posedness results exist for the coupled system where viscosity depends exponentially on solute concentration. This case poses two distinct mathematical challenges. First, structurally, the Darcy system lacks a velocity diffusion term (Laplacian of velocity), leading to low spatial regularity of the velocity field. The divergence-free condition alone is insufficient to provide control over velocity gradients. Second, the exponential viscosity function is only locally Lipschitz continuous, not globally Lipschitz. This precludes the use of standard uniqueness arguments that rely on global growth conditions. These combined difficulties—low velocity regularity and aggressive nonlinearity create serious obstacles to proving the uniqueness of the solution.

Therefore, the primary goal of this paper is to prove the well-posedness of a model for density-driven miscible flow in porous media that incorporates linear adsorption and concentration-dependent viscosity, under suitable assumptions. To overcome the difficulties posed by the locally Lipschitz viscosity, we employ a truncation argument combined with the maximum principle to establish necessary $L^\infty$ bounds on the concentration. The concept of well-posedness, which is essential for any physically meaningful model, requires the existence, uniqueness, and continuous dependence of solutions on the initial data. Beyond the existence theory, we derive rigorous estimates for the total kinetic energy and the degree of mixing in terms of key parameters, specifically the density contrast, viscosity contrast, and adsorption coefficient. Using these estimates, we predict the effect of these coefficients on the instability and mixing efficiency. To validate these theoretical estimates, we perform numerical simulations using a pressure-based weak formulation \cite{wei1994stabilized, darlow1984mixed} in COMSOL Multiphysics.  Unlike the commonly used stream function–vorticity approach, is applicable in both two and three dimensions, making it suitable for realistic simulations. We compute the total kinetic energy and the degree of mixing for various parameters, and the results agree closely with our theoretical predictions.

The paper is organised as follows. Section~\ref{sec:mathematical model} sets out the governing equations describing density-driven fingering, incorporating viscosity and density contrasts, as well as adsorption effects. In Section~\ref{notations}, we introduce the notation and recall a few preliminary results for our subsequent use. In Section~\ref{Existence}, we prove the existence and uniqueness of weak solutions using the Galerkin method. Section~\ref{asym} focuses on long-time behavior. Section~\ref{Numerical Results} explains the numerical approach adopted in COMSOL and illustrates how different parameters influence the behavior of the solution. Finally, Section~\ref{concluding remark}, concludes the paper with a brief summary and a discussion of possible extensions of this work.
 %%%%%%%%%%%%%%%%%%%%%%%%%%%%%%%%%%%%%%%%%%%%%%%%
\section{ Mathematical Model }\label{sec:mathematical model}
We consider the flow of a miscible solute–solvent mixture in a saturated porous medium occupying a bounded Lipschitz domain $\Omega \subset \mathbb{R}^2$ with boundary $\partial \Omega$, over the time interval $(0, \infty)$. The flow is assumed to be governed by the Darcy's law, and the solute concentration evolves according to a convection–diffusion–adsorption equation. Under the assumption of local equilibrium between the mobile and adsorbed phases, the adsorbed concentration $c_s$ satisfies the linear isotherm relation $c_s = k\, c_m$, where $k$ denotes the adsorption coefficient. For convenience in the notation, we denote the concentration in the mobile phase $c_m$ by $c$. With this assumption, the following system of equations governs the dynamics of the flow:
\begin{subequations}\label{model}
\begin{equation}\label{model1}
  \boldsymbol{\nabla} \cdot \boldsymbol{u} = \boldsymbol{0}  \quad \text{in}\ (0,\infty) \times \Omega,  
\end{equation}
\begin{equation}\label{model2}
\frac{\mu(c)}{K}\boldsymbol{u} = - \boldsymbol{\nabla} p - \rho(c)\boldsymbol{g}  \quad \text{in}\ (0,\infty) \times \Omega,
\end{equation}
\begin{equation}\label{model3}
   (1+k) \frac{\partial c}{\partial t} + \boldsymbol{\nabla} \cdot (c \boldsymbol{u})   = D \Delta c  - \kappa\, c  \quad \text{in}\ (0,\infty) \times \Omega. 
\end{equation}
Here, $\boldsymbol{g}(\boldsymbol{x})$ denotes the gravity vector and $\kappa(\boldsymbol{x})$ the spatially varying reaction rate. Throughout the analysis, we assume that $\boldsymbol{g} \in \boldsymbol{L}^\infty(\Omega)$ with $\boldsymbol{\nabla} \times \boldsymbol{g} =0$ and that there exist positive constant $\kappa_1$ such that $ \kappa(\boldsymbol{x}) \ge \kappa_1$ for almost every $\boldsymbol{x} \in \Omega$. The fluid density \cite{Almarcha_2010} and dynamic viscosity are assumed to depend on the solute concentration through the following relations:
\begin{align}
\rho(c) = 1 + \alpha\, c,  \quad
\mu(c) =  \exp(R\, c), \label{eq:viscosity}
\end{align}
where parameters $\alpha>0$ and $R>0$ are the density and viscosity contrast coefficients, reflecting the sensitivity of fluid properties to concentration variations.
The system is supplemented with no-flux boundary conditions:
\begin{equation}\label{boundary conditions}
\boldsymbol{u}\cdot \boldsymbol{\nu} = 0, \quad \boldsymbol{\nabla}c \cdot\boldsymbol{\nu}  = 0 \quad \text{on } (0,\infty) \times \partial \Omega,
\end{equation}
and the initial concentration distribution at $t=0$ is prescribed by:
\begin{equation}\label{initial conditions}
c( 0,\boldsymbol{x}) = c_{0}(\boldsymbol{x}) \quad \text{for } \boldsymbol{x} \in \Omega.
\end{equation}
We assume the initial data is bounded and nonnegative:
\begin{equation}\label{initial condition assumptation}
0  \leq c_0(\boldsymbol{x}) \leq M_2 \quad \text{for } \boldsymbol{x} \in \Omega,
\end{equation}
\end{subequations}
where $M_2$ are non-negative constants.

 %%%%%%%%%%%%%%%%%%%%%%%%%%%%%%%%%%%%%%%%%%%%%%%
%% Notations and Auxiliary Results
\section{ Preliminaries and Concept of Weak Solution} 
%{Notations and Auxiliary results}
\label{notations}
In this section, we introduce the standard notation and recall certain well-known analytical results that will be used throughout the paper. Let $\Omega \subset \mathbb{R}^2$ be a bounded domain with the Lipschitz boundary. The Sobolev spaces $W^{k,p}(\Omega)$ and $H^k(\Omega)$ are understood in the usual sense, endowed with their standard norms. The notation $\br{\cdot}{\cdot}$ denotes the standard inner product in both $L^2(\Omega)$ and $(L^2(\Omega))^2$. Similarly, $\nrm{.}$ denotes the standard norm in these spaces. 
Let
\begin{align*}
&\boldsymbol{H}_0(\mathrm{div};\Omega) := \left\{ \boldsymbol{v} \in (L^2(\Omega))^2 \;\middle|\; \boldsymbol{\nabla} \cdot \boldsymbol{v} \in L^2(\Omega),~ \boldsymbol{v} \cdot \boldsymbol{\nu} = 0\,  \text{ on } \partial \Omega\right\}, \\[2pt]
&\boldsymbol{H}(\Omega) := \left\{ \boldsymbol{v} \in \boldsymbol{H}_0(\mathrm{div};\Omega) \;\middle|\; \boldsymbol{\nabla}\cdot \boldsymbol{v} = 0\, \text{ a.e. in } \Omega \right\}, 
\end{align*}
where $\boldsymbol{\nu}$ denotes the outward unit normal vector. For any Banach space $Y$ with norm $\|\cdot\|_Y$, we define the Bochner spaces:
$$ {L^{p}(a, b ; Y)} := \left\{ \phi:[a, b] \rightarrow Y : \|\phi\|_{L^{p}(a, b ; Y)}=\left(\int_{a}^{b}\|\phi\|_{Y}^{p} \, \mathrm{d} t \right)^{\frac{1}{p}}  < \infty      \right\} ~ \text{for} ~1 \leq p < \infty, $$ 
and 
$${L^{\infty}(a, b ; Y)} := \left\{ \phi:[a, b] \rightarrow Y :  \|\phi\|_{L^{\infty}(a, b ; Y)}= \esssup_{[a, b]}\|\phi\|_{Y} < \infty      \right\}.$$
Throughout this manuscript, $ M\in [0, \infty)$ will denote a generic, context-dependent constant, whose value may change from line to line.

\begin{lem}[Gagliardo–Nirenberg, cf. { \cite[Lemma~1]{migorski2019nonmonotone}}, { \cite[Lemma~1.1]{garcke2019}}]\label{galirdo} 
If $\Omega \subset \mathbb{R}^2$ is a domain, then for any $ \phi \in H^{1}(\Omega)$ there exists a constant $M>0$ depending only on $\Omega$ such that the following inequality holds
\begin{equation*}
\|c\|_{L^r(\Omega)} \leq M({r, \Omega}) \|c\|_{L^2(\Omega)}^{1-\theta} \|c\|_{H^1(\Omega)}^\theta, \quad \theta = 1 - \frac{2}{r}.
\end{equation*}
\end{lem}

\noindent \textbf{Young's inequality:} Let $p,q$ be positive real numbers satisfying $\frac{1}{p} + \frac{1}{q} = 1$, and let $a,b$ be non-negative real numbers. Then for any given $\epsilon  > 0 $, there exist a non-negative constant say $M_{\epsilon}$ such that \[a b \leq \epsilon a^p + M_{\epsilon} b^q.\]

Below, we discuss the concept of weak solution of the system \eqref{model} %\label{def:weak solution}.
\begin{definition}[Weak Solution of the System~\eqref{model}]\label{def:weak solution}
A pair $(c, \boldsymbol{u})$ is said to be a weak solution of the system~\eqref{model1}–\eqref{initial condition assumptation} if the following conditions are satisfied:

\noindent(i) The solution possesses the global regularity$$c \in L^{\infty}(0,\infty;L^{\infty}(\Omega)) \cap L^{2}(0,\infty;H^{1}(\Omega)),\quad \boldsymbol{u} \in L^2(0,\infty;\boldsymbol{H}) \cap L^{\infty}(0,\infty;\boldsymbol{H}),$$and the local-in-time regularity for any finite $T > 0$,$$\frac{\partial c}{\partial t} \in L^{2}(0,T;(H^{1}(\Omega))^{\ast}).$$

\noindent(ii)  The initial condition $c(0, \boldsymbol{x}) = c_0(\boldsymbol{x})$ from~\eqref{initial conditions} is satisfied in $L^2(\Omega)$.

\noindent
(iii)
For almost every $t \in (0,\infty)$, the following identities hold:
\begin{align}\label{weak1}
    \br{ \tfrac{\mu(c)}{K}\,\boldsymbol{u}(t)}{ \boldsymbol{v} }
    + \br{ (1+\alpha\,c(t))\,\boldsymbol{g}} {\boldsymbol{v}} 
    = 0,
    \quad \forall\, \boldsymbol{v} \in \boldsymbol{H}(\Omega),
\end{align}
\begin{align}\label{weak2}
   (1+k)\,\innerproduct{ \frac{\partial c(t)}{\partial t}, \phi}
   -%\innerproduct{ %\boldsymbol{\nabla}\cdot (
   \br{c(t)\boldsymbol{u}(t)} {\nabla\phi}
   + D\,\br{ \nabla c(t)}{\nabla \phi} + \br{\kappa\,c}{\phi}
   = 0,
   \quad \forall\, \phi \in H^{1}(\Omega).
\end{align} 
\end{definition}
\noindent {  The nonlinear convection term $\boldsymbol{\nabla} \cdot (c\boldsymbol{u})$ is well-defined as an element of $L^2(0, T; (H^1(\Omega))^*)$ through the dual pairing $$\langle \boldsymbol{\nabla} \cdot (c\boldsymbol{u}), \phi \rangle = -\int_{\Omega} (c\boldsymbol{u}) \cdot \boldsymbol{\nabla} \phi,\quad \text{for almost every $t \in (0, \infty)$ and for all $\phi \in H^1(\Omega)$}. $$
The spatial integrability is guaranteed by the Cauchy--Schwarz inequality, as $c(t)\boldsymbol{u}(t) \in \boldsymbol{L}^2(\Omega)$ follows from the regularity of solution  $c(t) \in L^{\infty}(\Omega)$ and $\boldsymbol{u}(t) \in \boldsymbol{L}^2(\Omega)$. Furthermore, we have $$\| \boldsymbol{\nabla} \cdot (c(t)\boldsymbol{u}(t)) \|_{(H^1)^*} \leq \|c(t)\|_{L^\infty(\Omega)} \|\boldsymbol{u}(t)\|_{ \boldsymbol{L}^2(\Omega)} < \infty,~ \text{for almost every $t \in (0, \infty)$}.$$ }

\section{Existence and uniqueness results for the  weak solution}\label{Existence}
This section deals with existence of a unique weak solution to the problem  \eqref{model1}–\eqref{initial conditions}.

Below, we state the main theorem whose proof can be established after deriving some  {\it a priori} bounds.
\begin{thm}\label{thm21}
For any initial data $c_{0}
\in L^2(\Omega)$, there exists a solution $(c, \boldsymbol{u})$ to the system \eqref{model1}–\eqref{initial conditions}, in the sense of definition~\ref{def:weak solution}.
\end{thm}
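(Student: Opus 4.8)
The plan is to construct the solution by a Galerkin approximation combined with a truncation of the nonlinearity and an \emph{a posteriori} maximum principle. The only obstruction to global Lipschitz continuity is the exponential viscosity, and the only admissible concentrations are those in $[0,M_2]$; I therefore first replace $\mu(c)=\exp(Rc)$ by $\mu_M(c):=\exp\!\big(R\,P(c)\big)$, where $P$ is the projection onto $[0,M_2]$, so that $1\le\mu_M\le e^{RM_2}$ and $\mu_M$ is globally Lipschitz. The same truncation is inserted where the concentration multiplies the velocity, which (together with the divergence-free constraint) is what makes the approximate convective term well defined. The maximum principle established at the end shows that all truncations are inactive, so the constructed pair solves the original system \eqref{weak1}–\eqref{weak2}.

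For the approximation I would take $\{w_j\}$ to be the eigenfunctions of $-\Delta$ with homogeneous Neumann boundary conditions and seek $c_n(t)=\sum_{j=1}^n g_j(t)\,w_j$. The decisive structural point is that the velocity is \emph{slaved} to the concentration: for a frozen $c_n$ the Darcy subproblem ``find $\boldsymbol{u}_n\in\boldsymbol{H}(\Omega)$ with $\big(\tfrac{\mu_M(c_n)}{K}\boldsymbol{u}_n,\boldsymbol{v}\big)=-\big((1+\alpha c_n)\boldsymbol{g},\boldsymbol{v}\big)$ for all $\boldsymbol{v}\in\boldsymbol{H}(\Omega)$'' is uniquely solvable by the Lax--Milgram theorem, since $\mu_M\ge1$ makes the form coercive on $\boldsymbol{H}(\Omega)$ (the pressure drops out because the test space is divergence free). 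This defines $\boldsymbol{u}_n=\boldsymbol{u}_n[c_n]$ as a Lipschitz function of $c_n$, and substituting it into \eqref{weak2} reduces the scheme to a system of ODEs for $(g_j)$, for which Carathéodory's theorem yields a local solution.

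Next I would derive the uniform a priori bounds. Testing the discrete concentration equation with $c_n$, the convective term vanishes because $\int_\Omega c_n\,\boldsymbol{u}_n\cdot\nabla c_n=\tfrac12\int_\Omega \boldsymbol{u}_n\cdot\nabla(c_n^2)=0$ by the divergence-free/no-flux structure, leaving $\tfrac{1+k}{2}\tfrac{d}{dt}\|c_n\|_{L^2}^2+D\|\nabla c_n\|_{L^2}^2+\kappa_1\|c_n\|_{L^2}^2\le0$; this bounds $c_n$ in $L^\infty(0,\infty;L^2)\cap L^2(0,\infty;H^1)$ independently of $n$ and hence makes the ODE solution global. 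Testing the Darcy relation with $\boldsymbol{u}_n$ and using $\mu_M\ge1$ with $\boldsymbol{g}\in\boldsymbol{L}^\infty$ bounds $\boldsymbol{u}_n$ in $L^\infty(0,\infty;\boldsymbol{L}^2)$, the higher $\boldsymbol{L}^p$ bound following from the pointwise Darcy law once the $L^\infty$ bound on $c$ is available. Estimating the right-hand side of \eqref{weak2} against $\|\phi\|_{H^1}$ via the Gagliardo--Nirenberg inequality (Theorem~\ref{galirdo}) then controls $\partial_t c_n$ in $L^r(0,T;(H^1(\Omega))^\ast)$ with the stated $r$. These bounds furnish weak-$\ast$ limits, and Aubin--Lions (using $H^1\hookrightarrow\hookrightarrow L^2\hookrightarrow(H^1)^\ast$) gives $c_n\to c$ strongly in $L^2(0,T;L^2)$ and a.e.; since $\mu_M$ is bounded and continuous, $\mu_M(c_n)\to\mu_M(c)$ a.e. and boundedly, and feeding this into Darcy's equation and exploiting its coercive structure (a Minty-type argument) upgrades $\boldsymbol{u}_n\to\boldsymbol{u}$ to \emph{strong} convergence in $L^2(0,T;\boldsymbol{L}^2)$, which is what lets me pass to the limit in the product $c_n\boldsymbol{u}_n$ and recover \eqref{weak1}–\eqref{weak2}. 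To close, I establish the maximum principle on the limit: since $c\in L^2(0,\infty;H^1)$ with $\partial_t c\in L^r(0,T;(H^1)^\ast)$, the functions $(c-M_2)_+$ and $-c^-$, with $c^-:=\max(-c,0)$, are admissible test functions; the convective contributions again vanish and the reaction term carries a favourable sign, yielding $\tfrac{d}{dt}\|(c-M_2)_+\|_{L^2}^2\le0$ and $\tfrac{d}{dt}\|c^-\|_{L^2}^2\le0$, and because $(c_0-M_2)_+=c_0^-=0$ by \eqref{initial condition assumptation} we conclude $0\le c\le M_2$ a.e. Hence every truncation is inactive, $\mu_M(c)=\mu(c)$, and $(c,\boldsymbol{u})$ is a weak solution in the sense of Definition~\ref{def:weak solution}.

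The main obstacle, exactly as flagged in the introduction, is the low spatial regularity of the velocity: Darcy's law carries no $\Delta\boldsymbol{u}$, so a priori $\boldsymbol{u}$ lies only in $\boldsymbol{L}^2$, and the convective term $\big(c\boldsymbol{u},\nabla\phi\big)$ is not even meaningful for $\phi\in H^1$ until $c$ is known to be bounded—an apparent circularity between the $L^\infty$ estimate and the very definition of the equation. I break it by truncating the concentration by $M_2$ in the velocity coupling, so the approximate problem is well posed with only $\boldsymbol{L}^2$ velocity, and by recovering the $L^\infty$ bound a posteriori from the maximum principle, after which the truncation disappears. The second delicate point is that, because $\boldsymbol{u}_n$ enjoys no compactness of its own, the passage to the limit in $c_n\boldsymbol{u}_n$ must be driven by the strong convergence of the \emph{concentration} through the pointwise, coercive structure of Darcy's law rather than by any estimate on $\nabla\boldsymbol{u}_n$.
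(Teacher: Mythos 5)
Your overall architecture coincides with the paper's: truncate the exponential viscosity to make it globally Lipschitz and bounded, run a Galerkin scheme, derive the same energy bounds (the convective term vanishing by the divergence-free/no-flux structure), use Aubin--Lions for strong convergence of the concentration, pass to the limit via strong-times-weak convergence of products, and remove the truncation a posteriori through the maximum principle $0\le c\le M_2$. Within that common frame you make three genuine and defensible deviations. First, you truncate once at the known level $M_2$ (and also in the convective coupling), which collapses the paper's two-stage limit $n\to\infty$, then $\ell\to\infty$ into a single limit; this is a real streamlining, since in the paper the $\ell\to\infty$ passage is essentially vacuous once $\ell\ge M_2$ (the paper itself uses $\tilde\mu(c_\ell)=\mu(c_\ell)$ at that point). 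Truncating the coupling also lets you bound $\partial_t c_n$ in $(H^1)^\ast$ using only the $\boldsymbol{L}^2$ bound on $\boldsymbol{u}_n$, avoiding the $\boldsymbol{L}^4$ velocity estimate at the discrete level. Second, you solve the Darcy subproblem exactly in $\boldsymbol{H}(\Omega)$ by Lax--Milgram (a semi-Galerkin scheme) rather than discretizing the velocity in $\boldsymbol{H}_n$ as the paper does; this removes the density argument over $\bigcup_m \boldsymbol{H}_m$ and makes your third deviation cleaner: you upgrade $\boldsymbol{u}_n\to\boldsymbol{u}$ to \emph{strong} $L^2(0,T;\boldsymbol{L}^2)$ convergence by an energy/Minty identity exploiting $\mu_M\ge 1$, where the paper instead identifies the weak $L^2$ limit of $\tilde\mu(c_{\ell,n})\boldsymbol{u}_{\ell,n}$ and $c_{\ell,n}\boldsymbol{u}_{\ell,n}$ through weak $L^1$ convergence of strong-times-weak products. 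Your strong-convergence route is slightly stronger than needed (strong-times-weak suffices, as the paper shows) but it is correct and self-contained.

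There is, however, one genuine gap: the $\boldsymbol{L}^p$ regularity of the velocity for $p>2$, which is part of Definition~\ref{def:weak solution}(i) (namely $\boldsymbol{u}\in L^2(0,\infty;\boldsymbol{L}^p(\Omega))$ with $p\in[2,\infty)$ for $d=2$ and $p\in[2,6]$ for $d=3$), is not delivered by your argument. You assert that ``the higher $\boldsymbol{L}^p$ bound follows from the pointwise Darcy law once the $L^\infty$ bound on $c$ is available,'' but this does not work as stated: the weak formulation \eqref{weak1} is posed only against divergence-free test functions, so a pointwise Darcy law is available only after recovering the pressure by a de Rham argument, and even then the identity $\boldsymbol{u}=-\tfrac{K}{\mu(c)}(\nabla p+\rho(c)\boldsymbol{g})$ bounds $\boldsymbol{u}$ in $\boldsymbol{L}^p$ only if $\nabla p\in \boldsymbol{L}^p$, which is \emph{not} implied by the $L^\infty$ bound on $c$. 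The paper closes exactly this step by taking the divergence of Darcy's law to obtain the elliptic pressure equation \eqref{eq:pressure-elliptic} and invoking $L^p$ elliptic regularity for divergence-form operators with bounded coefficients (the estimate $\|\nabla p\|_{L^p}\le M\|\boldsymbol{F}\|_{L^p}$ in Lemma~\ref{lemma 2}, leading to \eqref{eq:velocity estimate}); some such citation or argument is unavoidable in your proof as well.

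A related internal inconsistency: having truncated the convective coupling so that the scheme is ``well posed with only $\boldsymbol{L}^2$ velocity,'' you nevertheless invoke the Gagliardo--Nirenberg inequality to bound $\partial_t c_n$ with the exponents $r=2$ ($d=2$), $r=4/3$ ($d=3$); that estimate is the paper's route and presupposes a uniform $\boldsymbol{L}^4$ bound on $\boldsymbol{u}_n$, which again only comes from the elliptic pressure estimate. With your truncated coupling you should instead note that $\|P(c_n)\boldsymbol{u}_n\|_{L^2}\le M_2\|\boldsymbol{u}_n\|_{L^2}$ gives $\partial_t c_n$ bounded in $L^2(0,T;(H^1)^\ast)$ directly (in both dimensions), which also simplifies the justification of the energy identities behind your limit-level maximum principle. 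Once you add the pressure-regularity step, your single-truncation, semi-Galerkin variant is a complete and somewhat leaner proof of Theorem~\ref{thm21} than the paper's.
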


For the proof of the Theorem ~\ref{thm21}, we adopt the following  strategy:
\begin{itemize}
    \item {\bf{Strategy-I}:} Due to exponential nonlinearity, we introduce the $\ell$-approximate problem based on the truncation technique whose solution pair may be referred to as $\{(c_{\ell},\boldsymbol{u}_{\ell})\}$.
    \item {\bf{Strategy-II}:} To this approximate problem, apply the Bubnov-Galerkin method, which provides finite dimensional approximation, say $\{(c_{\ell,n}, \boldsymbol{u}_{\ell,n})$ to the truncated system.
    \item {\bf{Strategy-III}:} We then derive uniform bounds of the sequence $\{(c_{\ell,n}, \boldsymbol{u}_{\ell,n})\}$ with respect to $n.$
    Using standard weak or weak* compactness arguments, and passing the limit as $n\longrightarrow \infty,$ we prove convergence of $(c_{\ell,n},\boldsymbol{u}_{\ell,n})$ to $(c_{\ell},\boldsymbol{u}_{\ell}).$
    \item {\bf{Strategy-IV}:} (Passing limit as $\ell \longrightarrow \infty)$)
    One of the crucial step is to show the nonnegativity of $c_{\ell}$ by using weak maximum principle. finally, we pass the limit $\ell \longrightarrow \infty$ to complete the rest of the proof.
\end{itemize}

\subsection{Truncation and Bubnov-Galerkin approximation.}\label{Truncation and Bubnov-Galerkin approximation}
Since exponential nonlinearity is involved in the viscosity term $\mu$, we use the following truncation technique.

We now define the truncated viscosity $\tilde{\mu}(s)$, which is based on a fixed level $\ell > 0$ as
\begin{equation} \label{eq:trunc_viscosity}
\tilde{\mu}(s) = 
\begin{cases}   
e^{R\, \ell} & \text{if } s > \ell, \\  e^{R\, s} & \text{if } -\ell \le s \le \ell, \\   e^{-R\, \ell} & \text{if } s < -\ell.
\end{cases}
\end{equation}
Therefore, we define $\ell$-approximation $(c_{\ell},\boldsymbol{u}_{\ell})$ of the system  \eqref{weak1}-\eqref{weak2} as 
\begin{align}\label{weak1-ell}
    \br{ \tfrac{\tilde{\mu}(c_{\ell})}{K}\,\boldsymbol{u}_{\ell}(t)}{ \boldsymbol{v} }
    + \br{ (1+\alpha\,c_{\ell}(t))\,\boldsymbol{g}} {\boldsymbol{v}} 
    = 0,
    \quad \forall\, \boldsymbol{v} \in \boldsymbol{H}(\Omega),
\end{align}
\begin{align}\label{weak2-ell}
   (1+k)\,\innerproduct{ \frac{\partial c_{\ell}(t)}{\partial t}, \phi}
   + \br{ c_{\ell}(t)\,\boldsymbol{u}_{\ell}(t)}{ \nabla \phi }
   + D\,\br{ \nabla c_{\ell}(t)}{\nabla \phi} + \br{\kappa\,c_{\ell}}{\phi}
   = 0,
   \quad \forall\, \phi \in H^{1}(\Omega).
\end{align}
In fact, we first prove the existence of a weak pair of solution $(c_{\ell},\boldsymbol{u}_{\ell})$ in the sense of definition ~\ref{def:weak solution} using Bubnov-Galerkin method. 

Since $H^1(\Omega)$ and $\boldsymbol{H}(\Omega)$ are separable Hilbert spaces, let $\{w_i\}_{i=1}^{\infty}$ and 
$\{\boldsymbol{z}_1\}_{i=1}^{\infty}$, respectively,be the basis of $H^1(\Omega)$ and $\boldsymbol{H}(\Omega).$ 
 Let $W_n = \text{span}\{w_1, \dots, w_n\}$ and $\boldsymbol{H}_n = \text{span}\{\boldsymbol{z}_1, \dots, \boldsymbol{z}_n\}.$ Now, seek approximate solutions $\boldsymbol{u}_{\ell, n} : [0,\infty) \to \boldsymbol{H}_n$ and $c_{\ell, n} : [0,\infty) \to W_n$ of the form
\begin{equation}\label{galerkin solution}
c_{\ell, n}(t) = \sum_{i=1}^{n} \beta_i^n(t) w_i, \quad \boldsymbol{u}_{\ell, n}(t) = \sum_{i=1}^{n} \lambda_i^n(t) \boldsymbol{z}_i,      
\end{equation}
such that   satisfy %the Galerkin system 
for almost all $t\in (0, \infty)$
\begin{align}
\left( \frac{\tilde{\mu}(c_{\ell, n}(t))}{K} \boldsymbol{u}_{\ell, n}(t), \boldsymbol{z}_j \right) + \left( (1 + \alpha\, c_{\ell, n}(t)) \boldsymbol{g}, \boldsymbol{z}_j \right) &= 0, \label{finite weak 1} \\
(1 + k) \left\langle \frac{\partial c_{\ell, n}(t)}{\partial t}, w_j \right\rangle - (c_{\ell, n}(t) \boldsymbol{u}_{\ell, n}(t), \boldsymbol{\nabla} w_j) + D (\nabla c_{\ell, n}(t), \nabla w_j) + (\kappa\, c_{\ell, n}(t), w_j)  &= 0, \label{finite weak 2}
\end{align}
for all $j = 1, \dots, n.$  and with initial condition $c_{\ell, n}(0)$ as the $n$ the Fourier coefficient of $c_0$ defined by 
\[
c_{\ell, n}(0) = \sum_{j=1}^{n} (c_{0}, w_j) w_j.
\]

For each fixed truncation level $\ell > 0$, we consider the corresponding Galerkin approximation and denote the approximate solutions by $c_{\ell, n}$ and $u_{\ell, n}$.
The analysis proceeds in two stages: we first pass to the limit as $n \to \infty$ (for fixed $\ell$) to obtain a weak solution $(u_\ell, c_\ell)$ of the truncated problem, and then let $\ell \to \infty$ to recover a weak solution of the original system.

Since $W_n$ and $\boldsymbol{H}_n$ are finite-dimensional, the problem \eqref{finite weak 1}-\eqref{finite weak 2}   leads to a system of index one differential algebraic equations. Therefore, by \eqref{finite weak 1}, one can obtain $\boldsymbol{u}_{\ell, n}$ in terms of $c_{\ell, n}$ and on substitution in the \eqref{finite weak 2}, it leads to a system of nonlinear ODEs. Then, an application of the classical Picard–Lindelöf theorem yields the existence of a unique solution for $t\in (0,t^*),$  for some $t^*>0.$ For applying the continuation of arguments, we need some {\it a priori} bounds, which are given below.

\subsection{ {\it A priori} bounds}
%%%%%%%%%%%%%%%%%%%%%%%%%%%%%%% lemma 2 %%%%%%%%%%%%%%%%%%%%
This subsection deals with {\it apriori } bounds to be used subsequently.

\begin{lem}\label{lemma 2}
   The sequences $\{c_{\ell, n}\}$ and $\{\boldsymbol{u}_{\ell, n}\}$ are uniformly bounded in $  L^2(0,T;H^1(\Omega)) \cap L^{\infty}(0,T;L^2(\Omega))$ and $L^{\infty}(0,T;\boldsymbol{H}(\Omega))$,  respectively.
\end{lem}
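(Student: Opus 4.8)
The plan is to derive the stated uniform bounds through the standard energy method applied to the Galerkin system, exploiting the truncation to keep the viscosity bounded away from zero. First I would test the concentration equation \eqref{finite weak 2} with the natural choice $w_j = c_{\ell,n}(t)$ (equivalently, take the linear combination of test functions reproducing the Galerkin solution). This yields
\begin{equation*}
\frac{(1+k)}{2}\frac{\mathrm d}{\mathrm dt}\nr{c_{\ell,n}(t)} + D\,\nr{\nabla c_{\ell,n}(t)} + \br{\kappa\,c_{\ell,n}}{c_{\ell,n}} + \br{c_{\ell,n}\boldsymbol u_{\ell,n}}{\nabla c_{\ell,n}} = 0.
\end{equation*}
The convective term should vanish (or reduce to a boundary term that is controlled by the no-flux condition): writing $\br{c\,\boldsymbol u}{\nabla c} = \tfrac12\br{\boldsymbol u}{\nabla c^2}$ and integrating by parts, the divergence-free condition $\boldsymbol\nabla\cdot\boldsymbol u_{\ell,n}=0$ together with $\boldsymbol u_{\ell,n}\cdot\boldsymbol\nu=0$ on $\partial\Omega$ forces this term to zero. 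Using $\kappa\ge\kappa_1>0$ to discard the nonnegative reaction term, I obtain the differential inequality
\begin{equation*}
\frac{\mathrm d}{\mathrm dt}\nr{c_{\ell,n}(t)} + \frac{2D}{1+k}\,\nr{\nabla c_{\ell,n}(t)} \le 0,
\end{equation*}
which upon integration in time from $0$ to $\infty$ gives a uniform (in both $n$ and $\ell$) bound on $\|c_{\ell,n}\|_{L^\infty(0,\infty;L^2)}$ in terms of $\|c_{\ell,n}(0)\|_{L^2}\le\|c_0\|_{L^2}$, and a uniform bound on $\|\nabla c_{\ell,n}\|_{L^2(0,\infty;L^2)}$. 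Combining these delivers the claimed bound in $L^2(0,\infty;H^1)\cap L^\infty(0,\infty;L^2)$.

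For the velocity, I would test the Darcy relation \eqref{finite weak 1} with $\boldsymbol z_j=\boldsymbol u_{\ell,n}(t)$ to get
\begin{equation*}
\br{\tfrac{\tilde\mu(c_{\ell,n})}{K}\boldsymbol u_{\ell,n}}{\boldsymbol u_{\ell,n}} = -\br{(1+\alpha\,c_{\ell,n})\boldsymbol g}{\boldsymbol u_{\ell,n}}.
\end{equation*}
Here the truncation is essential: by \eqref{eq:trunc_viscosity} we have the pointwise lower bound $\tilde\mu(s)\ge e^{-R\ell}$, so the left side dominates $\tfrac{e^{-R\ell}}{K}\nr{\boldsymbol u_{\ell,n}}$. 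The right side is estimated by Cauchy–Schwarz and Young's inequality, using $\boldsymbol g\in\boldsymbol L^\infty(\Omega)$ and the $L^2$ control of $c_{\ell,n}$ already obtained. This produces a pointwise-in-time bound $\nrm{\boldsymbol u_{\ell,n}(t)}\le C(\ell)\,(1+\nrm{c_{\ell,n}(t)})$; integrating in time and invoking the $L^\infty(0,\infty;L^2)$ bound on $c_{\ell,n}$ gives $\boldsymbol u_{\ell,n}\in L^2(0,\infty;\boldsymbol H)$ uniformly in $n$. Since $\boldsymbol u_{\ell,n}$ already lies in the divergence-free space $\boldsymbol H_n\subset\boldsymbol H(\Omega)$, the divergence-free and no-flux properties are automatic.

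The $\boldsymbol L^p$ bound for $2\le p\le 6$ is the more delicate part, and I expect it to be the main obstacle, since Darcy's system provides no velocity-gradient control (no Laplacian of $\boldsymbol u$), so elliptic regularity is unavailable. I would extract higher integrability directly from the algebraic Darcy relation: solving \eqref{finite weak 1} pointwise gives $\boldsymbol u_{\ell,n}=-\tfrac{K}{\tilde\mu(c_{\ell,n})}(1+\alpha\,c_{\ell,n})\boldsymbol g$ only in a projected sense, so more carefully I would use that $|\boldsymbol u_{\ell,n}|$ is controlled pointwise by $\tfrac{K\,e^{R\ell}}{1}\,(1+\alpha|c_{\ell,n}|)\,|\boldsymbol g|$, whence $\nrm[\big]{\boldsymbol u_{\ell,n}}_{\boldsymbol L^p}\le C(\ell)\,\|\boldsymbol g\|_{\boldsymbol L^\infty}(1+\|c_{\ell,n}\|_{L^p})$. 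The Gagliardo–Nirenberg inequality (Theorem~\ref{galirdo}) then converts the $H^1$ control of $c_{\ell,n}$ into $L^p$ control for the relevant range of $p$ (interpolating between $L^2$ and $L^6\hookleftarrow H^1$), and integrating the resulting inequality in time, using the already-established $L^2(0,\infty;H^1)\cap L^\infty(0,\infty;L^2)$ bound on $c_{\ell,n}$, yields the uniform $L^2(0,\infty;\boldsymbol L^p)$ estimate. Throughout, I would keep careful track of which constants depend on $\ell$ (via $e^{R\ell}$) and which are $\ell$-independent; only the $c_{\ell,n}$ bounds from the first step are genuinely uniform in $\ell$, while the velocity bounds here are permitted to depend on $\ell$ at this stage of the argument.
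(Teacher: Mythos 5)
Your energy estimate for the concentration is the paper's, but with one consequential omission: you discard the reaction term, whereas the paper keeps $2\kappa_1\int_0^{\tau}\nr{c_{\ell,n}(t)}\,dt$ on the left-hand side of the integrated inequality. On the \emph{infinite} horizon this term is not expendable: $L^{\infty}(0,\infty;L^2(\Omega))$ together with $\boldsymbol{\nabla}c_{\ell,n}\in L^2(0,\infty;\boldsymbol{L}^2(\Omega))$ does \emph{not} imply $c_{\ell,n}\in L^2(0,\infty;H^1(\Omega))$, since the $\|c_{\ell,n}(t)\|_{L^2}^2$ part of the $H^1$-norm need not be time-integrable (a time-independent profile satisfies both of your bounds and fails the conclusion). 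The $\kappa_1$-term is exactly what supplies $c_{\ell,n}\in L^2(0,\infty;L^2(\Omega))$. The same infinite-horizon defect propagates into your velocity step: the bound $\nrm{\boldsymbol{u}_{\ell,n}(t)}\le C(\ell)\,(1+\nrm{c_{\ell,n}(t)})$ has a non-decaying additive constant, so its square is not integrable over $(0,\infty)$, and invoking the $L^{\infty}(0,\infty;L^2)$ bound on $c_{\ell,n}$ only yields $\boldsymbol{u}_{\ell,n}\in L^{\infty}(0,\infty;\boldsymbol{L}^2(\Omega))$, not the claimed $L^2(0,\infty;\boldsymbol{H}(\Omega))$. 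To conclude you need the time-decay coming from $c_{\ell,n}\in L^2(0,\infty;L^2(\Omega))$, i.e.\ again the reaction term, together with a treatment of the concentration-independent part of the buoyancy.

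The $\boldsymbol{L}^p$ estimate is where your route genuinely fails. The asserted pointwise control $|\boldsymbol{u}_{\ell,n}|\le K e^{R\ell}(1+\alpha|c_{\ell,n}|)\,|\boldsymbol{g}|$ does not follow: the Galerkin relation \eqref{finite weak 1} only determines $\boldsymbol{u}_{\ell,n}$ through its pairing with divergence-free test fields, and even for the continuous Darcy equation the velocity is $-\tfrac{K}{\tilde{\mu}}(\boldsymbol{\nabla}p+\rho\boldsymbol{g})$, so any pointwise (or $L^p$) bound must control $\boldsymbol{\nabla}p$ --- which you simply drop. This is precisely the step the paper performs differently: it applies the divergence operator to Darcy's law to obtain a uniformly elliptic equation for the pressure, $\boldsymbol{\nabla}\cdot\bigl(\tfrac{K}{\tilde{\mu}(c_{\ell,n})}\boldsymbol{\nabla}p_n\bigr)=\boldsymbol{\nabla}\cdot\boldsymbol{F}_n$ with $\boldsymbol{F}_n=\tfrac{K}{\tilde{\mu}(c_{\ell,n})}(1+\alpha c_{\ell,n})\boldsymbol{g}$, notes that the truncation makes the coefficient bounded below and Lipschitz, and invokes classical $L^p$ elliptic regularity to get $\|\boldsymbol{\nabla}p_n(t)\|_{L^p}\le M\|\boldsymbol{F}_n(t)\|_{L^p}$, whence $\|\boldsymbol{u}_{\ell,n}(t)\|_{L^p}\le M\|\boldsymbol{F}_n(t)\|_{L^p}$ for $p\in[2,\infty)$. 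Your remark that elliptic regularity is ``unavailable'' because there is no velocity Laplacian misidentifies the mechanism: it is unavailable for the velocity but is available for the pressure, and that is the paper's key idea for the $\boldsymbol{L}^p$ bound. Your testing of \eqref{finite weak 1} with $\boldsymbol{u}_{\ell,n}$ is sound but only ever yields the $L^2$-in-space estimate; the range $2\le p\le 6$ cannot be reached from it by interpolating the concentration alone.
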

\begin{proof}
Multiply equation \eqref{finite weak 2} by $\beta_j^n(t)$ and sum over $j = 1, \dots, n$, to obtain
\begin{align*}
 \frac{(k+1)}{2}\frac{d}{dt}\nr{c_{\ell, n}(t)} - \frac{1}{2}\br{\boldsymbol{u}_{\ell, n}(t)}{\boldsymbol{\nabla}c^2_n(t)}  +  D \nr{\boldsymbol{\nabla}c_{\ell, n}(t)} +  \kappa_1 \nr{c_{\ell, n}(t)}\leq 0, ~~a.e.  ~on~  (0,\infty). 
\end{align*}
The second term vanishes using the divergence theorem and the divergence-free condition of $\boldsymbol{u}_{\ell, n}$.  Integrating the resulting equation from $0$ to $\tau\in(0, T)$, we obtain
\begin{align*}
(k+1) \nr{c_{\ell, n}(t)} + 2 D \int_{0}^{\tau}\nr{\boldsymbol{\nabla}c_{\ell, n}(t)}  + 2 \,\kappa_1 \int_{0}^{\tau} \nr{c_{\ell, n}(t)} \leq (k+1) \nr{c_{\ell, n}(0)}.   
\end{align*}
Since, $ \nr{c_{n,\ell}(0)} \leq \nr{c_{0}}$, we observe that $\esssup_{t \in (0,T)} \nrm{c_{\ell, n}(t)} \leq \nrm{c_{0}}$ and %This observation, together with the above %inequality, yields that 
hence, the sequence $\{c_{\ell, n}\}_{n=1}^{\infty}$ is uniformly bounded in $L^{\infty}(0, T; L^2(\Omega)) \cap L^2(0, T; H^1(\Omega))$. Now by multiply equation \eqref{finite weak 1} by $\lambda_j^n(t)$ and sum over $j = 1, \dots, n$, we arrive at
\begin{align*}
    \br{\frac{\tilde{\mu}(c_{\ell,n})}{K}\,\boldsymbol{u}_{\ell,n}}{\boldsymbol{u}_{\ell,n}} = -  \left( (1 + \alpha\, c_{\ell, n}(t)) \boldsymbol{g}, \boldsymbol{u}_{\ell,n} \right)
\end{align*}
An application of the Holder's inequality together with  $\boldsymbol{g}\in \boldsymbol{L}^{\infty}(\Omega)$ and $ e^{-R\, \ell} \leq \tilde{\mu}(c_{\ell,n})  \leq e^{R\, \ell}$ shows
\begin{align*}
  \frac{e^{-R\, \ell}}{K}  \nr{\boldsymbol{u}_{\ell,n}(t)} \leq \norm{\boldsymbol{g}} \nrm{1 + \alpha\, c_{\ell,n}(t)} \nrm{\boldsymbol{u}_{\ell,n}(t)}
\end{align*}
Now since $c_{\ell,n}$ is uniformly bounded in $L^{\infty}(0,T; L^2(\Omega))$ with respect to $n$, therefore the above inequality gives $\boldsymbol{u}_{\ell,n}$ is uniformly bounded in $L^{\infty}(0,T;\boldsymbol{H}(\Omega))$.

 Now we improve the integrability of velocity by Meyers' estimate. Applying the divergence operator to the Darcy equation~\eqref{model2} (which holds a.e. in $t$), we derive the following elliptic equation for the pressure:
\begin{equation} \label{eq:pressure-elliptic}
\boldsymbol{\nabla} \cdot \left( \frac{K}{\tilde{\mu}(c_{\ell, n})} \nabla \tilde{p}_{\ell,n} \right)= \boldsymbol{\nabla} \cdot \left( \frac{\alpha K}{\tilde{\mu}(c_{\ell, n})}  c_{\ell, n} \boldsymbol{g} \right)\quad \text{in } \Omega.
\end{equation}
where $\boldsymbol{\nabla} \tilde{p}_n = \boldsymbol{\nabla} p_n - \boldsymbol{g}$.
From definition ~\eqref{eq:trunc_viscosity}, we have the coefficient $K e^{-R\, \ell} \leq \frac{K}{\tilde{\mu}(c_{\ell, n})} \leq K e^{R\, \ell}$ is bounded from below and from above. We denote the effective force term on the right-hand side by $\boldsymbol{F}_n = \frac{K}{\tilde{\mu}(c_{\ell, n})} (1+\alpha \,c_{\ell, n}) \boldsymbol{g}$. Since the coefficients are uniformly bounded, by Meyers estimate \cite{meyers1963p} there exist a $\delta > 0$, depending upon the ellipticity ratio $e^{2 R\, \ell}$ and the domain $\Omega$ such that 
{$$\|\nabla \tilde{p}_n(t)\|_{L^{2+\delta}} \leq M \|\boldsymbol{F}_n(t)\|_{L^{2+\delta}},$$}
where $M$ is a positive constant depending only on $d$, $p$, and the domain $\Omega$. Combining this pressure estimate with the original Darcy equation yields the following bound for the velocity:
{\begin{align}\label{eq:velocity estimate}
 \|\boldsymbol{u}_{\ell, n}(t)\|_{L^{2+\delta}}   \leq  M  \alpha K e^{R\, \ell} \|\boldsymbol{g}\|_{L^{\infty}}\|c_{\ell,n}(t)\|_{L^{2+\delta}},\ \ \text{for some}\ \delta>0. 
\end{align}}
Now, using the Sobolev embedding theorems, $\|\varphi\|_{L^p} \leq M  \|\varphi\|_{H^1}$, $p \in [2,\infty)$. Hence $c_{\ell, n}$  is  uniformly bounded in $L^2(0, T; L^p(\Omega))$ for any $p \in [2, \infty)$.  Invoking the Lemma~\ref{galirdo}  on Gagliardo-Nirenberg interpolation inequality, there holds the following estimate for any $t > 0$:
{\begin{align*}
\|c_{\ell, n}(t) \|_{L^{2(2+\delta)/\delta}} \leq M \, \|c_{\ell, n}(t) \|_{L^2}^{\delta/(2+\delta)} \, \|c_{\ell, n}(t) \|_{H^1}^{2/(2+\delta)}, \quad \text{ for any } \delta >0.
\end{align*}}
Raising the inequality to $2+\delta$ power yields:
\begin{align*}
\|c_{\ell, n}(t) \|_{L^{2(2+\delta)/\delta}}^{2+\delta} \leq M \, \|c_{\ell, n}(t) \|_{L^2}^{\delta} \, \|c_{\ell, n}(t) \|_{H^1}^{2}.
\end{align*}
Using the uniform bounds for $c_{\ell, n}$ in $L^{\infty}(0, T; L^2(\Omega))$ and $L^2(0, T; H^1(\Omega))$, we deduce that $c_{\ell, n}$ is uniformly bounded in $L^{2+\delta}(0,T; L^{2(2+\delta)/\delta}(\Omega))$. {Again applying Lemma ~\ref{galirdo}  on Gagliardo-Nirenberg interpolation inequality, we arrive at
$$ \|c_{\ell,n}(t)\|_{L^{2+\delta}} \leq M \|c_{\ell,n}(t)\|_{L^{2}}^{2/(2+\delta)} \|c_{\ell,n}(t)\|_{H^{1}}^{\delta/(2+\delta)}$$
Raising the above inequality to the $2(2+\delta)/{\delta}$ power gives:
$$ \|c_{\ell,n}(t)\|_{L^{2+\delta}}^{2(2+\delta)/{\delta}} \leq M^{2(2+\delta)/{\delta}} \|c_{\ell,n}(t)\|_{L^{2}}^{4/\delta} \|c_{\ell,n}(t)\|_{H^{1}}^{2}$$}
Using $c_{\ell, n}$ uniformly bounded in $L^{\infty}(0,  T; L^2(\Omega))$ and $L^2(0, T; H^1(\Omega))$, we deduce that $c_{\ell, n}$ is uniformly bounded in $L^{2(2+\delta)/{\delta}}(0,T;L^{2+\delta}(\Omega))$. Finally using these estimate in inequality \eqref{eq:velocity estimate}  the velocity $\boldsymbol{u}_{\ell, n}$ is uniformly bounded in $L^{2(2+\delta)/\delta}(0,T;\boldsymbol{L}^{2+\delta}(\Omega))$. 
\end{proof}

\begin{remark}
In the three-dimensional (3D) case, the coefficient $K/\tilde{\mu}(c_{\ell,n})$ lacks the necessary regularity required to achieve higher regularity for velocity from concentration to control the convection term.
To control the trilinear convection term via H{\"o}lder's inequality, we observe that:
\[
| (c_{\ell,n} \boldsymbol{u}_{\ell,n}, \boldsymbol{\nabla}\phi) | \leq \|\boldsymbol{u}_{\ell, n}(t)\|_{L^{2+\delta}} \|c_{\ell,n}(t)\|_{L^{\frac{2(2+\delta)}{\delta}}} \|\boldsymbol{\nabla}\phi\|_{L^2(\Omega)}.
\]
As $\delta \to 0^+$, the required integrability exponent for the concentration behaves asymptotically as $\frac{2(2+\delta)}{\delta} \to \infty$. 

In a 2D domain, since $c_{\ell,n} \in H^1(\Omega)$, the Sobolev embedding theorem ensures that $c_{\ell,n} \in L^p(\Omega)$ for any $p \in [2, \infty)$, meaning the norm $\|c_{\ell,n}\|_{L^{\frac{2(2+\delta)}{\delta}}(\Omega)}$ remains bounded for any chosen Meyers increment $\delta > 0$. In contrast, for a 3D domain, the continuous embedding $H^1(\Omega) \hookrightarrow L^p(\Omega)$ holds strictly for $p \in [2, 6]$. Consequently, this framework fails to control the convection term whenever $\frac{2(2+\delta)}{\delta} > 6$, which simplifies to the condition $\delta < 1$. 
\end{remark}

%%%%%%%%%%%%%%%%%%%%%%%%%%%%% lemma 3 %%%%%%%%%%%%%%%%%%%%%%%%%%%%%%%%%%%%%%%
%%%%%%%%%%%%%%%%%%%%%%%%%%%%%%%%%%%%%%%%%%%%%%%%%%%%%%%%%%%%%%%%%%%%%%%%%%%%%
\begin{lem}\label{lemma 3}
The sequence of time derivatives $\left\{ \frac{\partial c_{\ell, n}}{\partial t} \right\}$ is uniformly bounded in 
$L^{2}(0, T; (H^{1}(\Omega))^{\ast})$. 
\end{lem}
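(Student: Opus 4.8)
The plan is to test the Galerkin identity \eqref{finite weak 2} against a generic $\phi \in H^1(\Omega)$ and to bound the resulting duality pairing by the \emph{a priori} quantities already controlled in Lemma~\ref{lemma 2}. Since $\frac{\partial c_{\ell,n}}{\partial t} \in W_n$, for any $\phi \in H^1(\Omega)$ one has $\langle \frac{\partial c_{\ell,n}}{\partial t}, \phi\rangle = \langle \frac{\partial c_{\ell,n}}{\partial t}, P_n\phi\rangle$, where $P_n$ denotes the $L^2(\Omega)$-orthogonal projection onto $W_n$. Choosing the basis $\{w_j\}$ to consist of eigenfunctions of the Neumann Laplacian makes $P_n$ simultaneously stable in $L^2$ and $H^1$, so that $\|P_n\phi\|_{H^1} \le \|\phi\|_{H^1}$. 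Substituting $P_n\phi$ for the test function in \eqref{finite weak 2} then gives, for a.e.\ $t$,
\[
(1+k)\Big\langle \tfrac{\partial c_{\ell,n}}{\partial t}, \phi\Big\rangle = -\br{c_{\ell,n}\boldsymbol{u}_{\ell,n}}{\nabla P_n\phi} - D\br{\nabla c_{\ell,n}}{\nabla P_n\phi} - \br{\kappa\, c_{\ell,n}}{P_n\phi}.
\]

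Next I would estimate the three terms on the right pointwise in $t$. By Cauchy–Schwarz, the diffusion term is bounded by $D\,\nrm{\nabla c_{\ell,n}}\,\|\phi\|_{H^1}$ and, using $\kappa \in L^\infty(\Omega)$, the reaction term by $\|\kappa\|_{L^\infty}\,\nrm{c_{\ell,n}}\,\|\phi\|_{H^1}$. The convective term is the delicate one: applying Hölder with exponents $4,4,2$,
\[
\big|\br{c_{\ell,n}\boldsymbol{u}_{\ell,n}}{\nabla P_n\phi}\big| \le \|c_{\ell,n}\|_{L^4}\,\|\boldsymbol{u}_{\ell,n}\|_{L^4}\,\nrm{\nabla P_n\phi} \le \|c_{\ell,n}\|_{L^4}\,\|\boldsymbol{u}_{\ell,n}\|_{L^4}\,\|\phi\|_{H^1}.
\]
Taking the supremum over $\|\phi\|_{H^1}\le 1$ and dividing by $(1+k)$ yields the pointwise bound
\[
\Big\|\tfrac{\partial c_{\ell,n}(t)}{\partial t}\Big\|_{(H^1(\Omega))^*} \le M\Big(\|c_{\ell,n}\|_{L^4}\|\boldsymbol{u}_{\ell,n}\|_{L^4} + \nrm{\nabla c_{\ell,n}} + \nrm{c_{\ell,n}}\Big).
\]

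Finally I would integrate in time over $(0,T)$, tracking the exponents that keep the right side finite uniformly in $n$ (and in $\ell$, since the Lemma~\ref{lemma 2} bounds depend only on $\nrm{c_0}$, $D$, $\kappa_1$, $\alpha$, $\|\boldsymbol{g}\|_{L^\infty}$ and $\Omega$). The diffusion contribution lies in $L^2(0,T)$ because $c_{\ell,n}\in L^2(0,\infty;H^1)$, and the reaction contribution lies in $L^\infty(0,T)$ since $c_{\ell,n}\in L^\infty(0,\infty;L^2)$; both are harmless on a finite interval. The decisive term is the convective one, whose time integrability is dictated by the mixed space–time bounds from Lemma~\ref{lemma 2}: in $d=2$ we have $c_{\ell,n},\boldsymbol{u}_{\ell,n}\in L^4(0,\infty;\boldsymbol{L}^4)$, so a Cauchy–Schwarz in time gives $\|c_{\ell,n}\|_{L^4}\|\boldsymbol{u}_{\ell,n}\|_{L^4}\in L^2(0,\infty)$ and hence $\partial_t c_{\ell,n}\in L^2(0,T;(H^1)^*)$; in $d=3$ we only have $c_{\ell,n},\boldsymbol{u}_{\ell,n}\in L^{8/3}(0,\infty;\boldsymbol{L}^4)$, and Hölder in time with conjugate exponents $2,2$ gives $\|c_{\ell,n}\|_{L^4}\|\boldsymbol{u}_{\ell,n}\|_{L^4}\in L^{4/3}(0,\infty)$, producing the exponent $r=4/3$. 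Thus the convective term is exactly what forces the dimension-dependent exponent, and the main (though modest) obstacle is securing the $H^1$-stability of $P_n$ so that the dual-norm estimate is legitimate; everything else is bookkeeping of the Lemma~\ref{lemma 2} bounds.
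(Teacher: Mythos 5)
Your proposal is correct and follows essentially the same route as the paper: project the test function onto $W_n$, bound the convective term by H\"older with exponents $(4,4,2)$, take the supremum over the unit ball of $H^1(\Omega)$, and integrate in time using the $L^4(0,\infty;\boldsymbol{L}^4)$ (for $d=2$) and $L^{8/3}(0,\infty;\boldsymbol{L}^4)$ (for $d=3$) bounds on $c_{\ell,n}$ and $\boldsymbol{u}_{\ell,n}$ from Lemma~\ref{lemma 2}. The only cosmetic differences are that the paper splits the product $\|c_{\ell,n}\|_{L^4}\|\boldsymbol{u}_{\ell,n}\|_{L^4}$ by Young's inequality before integrating while you apply H\"older in time directly, and your explicit choice of a Neumann--Laplacian eigenbasis to guarantee simultaneous $L^2$/$H^1$ stability of the projection $P_n$ is, if anything, slightly more careful than the paper's appeal to the $H^1$-orthogonal projection.
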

\begin{proof}
We decompose any $\phi \in H^1(\Omega)$ into $\phi = \phi_n + \varphi_n$, using the $H^1$-orthogonal projection $P_n$ onto $W_n$ (so $\phi_n \in W_n$ and $\varphi_n \in W_n^\perp$). By construction, $\partial_t c_{\ell, n} \in W_n$, which implies the orthogonality $\innerproduct{\partial_t c_{\ell, n}, \varphi_n}_{H^1} = 0$. Thus from equation \eqref{finite weak 2}, it follows that:
\begin{align*}
   (1+k)\left| \innerproduct{\frac{\partial c_{\ell, n}(t)}{\partial t},\phi}  \right| = (1+k)\left| \innerproduct{\frac{\partial c_{\ell, n}(t)}{\partial t},\phi_n}  \right|  \leq \left| \br{c_{\ell, n}(t)\boldsymbol{u}_{\ell, n}(t) }{\boldsymbol{\nabla}\phi_n} \right| + D\left|\br{\boldsymbol{\nabla}c_{\ell, n}(t)}{\boldsymbol{\nabla}\phi_n}  \right| + |\br{\kappa\,c_{\ell, n}(t)}{\phi_n}|,  
\end{align*}
$a.e.  ~on~  (0, T).$ Utilizing for the first term on the right hand side the generalized Hölder's inequality ($1/(2+\delta) +\delta/2(2+\delta)+1/2=1$) and the Hölder's inequality for the rest of the terms on the right-hand side of the above inequality with $\|phi_n\|_1 \leq \|\phi\|_1$, we obtain
\begin{align*}
    (1+k)\left| \innerproduct{\frac{\partial c_{\ell, n}(t)}{\partial t},\phi}  \right| & \leq \|c_{\ell, n}(t)\|_{L^{2(2+\delta)/\delta}} \|\boldsymbol{u}_{\ell, n}(t)\|_{L^{2+\delta}} \nrm{\boldsymbol{\nabla}\phi}\\ & \quad + D \nrm{\boldsymbol{\nabla}c_{\ell, n}(t)} \nrm{\boldsymbol{\nabla}\phi}+ \norm{\kappa}\nrm{c_{\ell, n}(t)}\nrm{\phi},
\end{align*}
for all $ \phi \in H^1(\Omega)$.
Taking the supremum over all $\phi \in H^1(\Omega)$  such that $\|\phi\|_{H^1(\Omega)} \leq 1$ in the above inequality yields:
\begin{align*}
    (1+k) \sup_{\phi \in H^1(\Omega), \|\phi\|_{H^1}\leq 1} \left| \innerproduct{\frac{\partial c_{\ell, n}(t)}{\partial t},\phi}  \right|  &\leq \frac{2}{2+\delta}\|c_{\ell, n}(t)\|_{L^{(2+\delta)/\delta}}^{2(2+\delta)/2} + \frac{\delta}{2 + \delta }\|\boldsymbol{u}_{\ell, n}(t)\|_{L^{2+\delta}}^{(2+\delta)/\delta}\\ & \quad +  D \nrm{\boldsymbol{\nabla}c_{\ell, n}(t)}+ \norm{\kappa}\nrm{c_{\ell, n}(t)},  
\end{align*}
$ a.e.  ~on~  (0, T)$. Since each term on the right-hand side of the above inequality is uniformly bounded in $L^2(0, T)$, it follows that the sequence 
\(
\left\{ \frac{\partial c_{\ell, n}}{\partial t} \right\}
\)
is uniformly bounded in 
\(
L^2(0,T; (H^1(\Omega))^*).
\)
\end{proof}

\subsection{Passage to limit as $n\longrightarrow \infty$.}
\begin{lem}\label{Passage to limit for n}
Let $\{(c_{\ell, n}, \boldsymbol{u}_{\ell, n})\}_{n \in \mathbb{N}}$ be the sequence of approximate solutions constructed in subsection \ref{Truncation and Bubnov-Galerkin approximation}. The limit $(c_{\ell}, \boldsymbol{u}_{\ell})$ obtained as $n \to \infty$ is a weak solution to the $\ell$-approximate problem defined by \eqref{weak1-ell} and \eqref{weak2-ell}.
\end{lem}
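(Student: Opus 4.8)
The plan is to pass to the limit in the Galerkin identities \eqref{finite weak 1}–\eqref{finite weak 2} by combining the uniform bounds of Lemmas~\ref{lemma 2} and~\ref{lemma 3} with weak compactness and a strong-compactness (Aubin–Lions) argument. First, since the bounds in Lemma~\ref{lemma 2} are uniform in $n$, reflexivity together with the Banach–Alaoglu theorem yields a subsequence (not relabelled) and limits $c_\ell$, $\boldsymbol{u}_\ell$ with $c_{\ell, n} \overset{*}{\rightharpoonup} c_\ell$ in $L^\infty(0,\infty;L^2(\Omega))$, $c_{\ell, n} \rightharpoonup c_\ell$ in $L^2(0,\infty;H^1(\Omega))$, and $\boldsymbol{u}_{\ell, n} \rightharpoonup \boldsymbol{u}_\ell$ in $L^2(0,\infty;\boldsymbol{H}(\Omega)) \cap L^2(0,\infty;\boldsymbol{L}^p(\Omega))$. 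By Lemma~\ref{lemma 3}, $\partial_t c_{\ell, n}$ is bounded in $L^r(0,T;(H^1(\Omega))^*)$, so up to a further subsequence $\partial_t c_{\ell, n} \rightharpoonup \partial_t c_\ell$ there. Using $H^1(\Omega) \hookrightarrow\hookrightarrow L^4(\Omega) \hookrightarrow (H^1(\Omega))^*$ (the first embedding being compact for $q=4$ in both $d=2,3$), the Aubin–Lions–Simon lemma upgrades this to strong convergence $c_{\ell, n} \to c_\ell$ in $L^2(0,T;L^4(\Omega))$, and hence along a subsequence $c_{\ell, n} \to c_\ell$ a.e.\ in $(0,T)\times\Omega$.

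Next I would fix test functions of the form $\phi(t,\boldsymbol{x})=\psi(t)w_j(\boldsymbol{x})$ and $\boldsymbol{v}(t,\boldsymbol{x})=\psi(t)\boldsymbol{z}_j(\boldsymbol{x})$ with $\psi \in C_c^\infty(0,T)$ and $j$ fixed, multiply \eqref{finite weak 1}–\eqref{finite weak 2} by $\psi(t)$, integrate over $(0,T)$, and let $n\to\infty$. The linear terms, namely the diffusion $D(\nabla c_{\ell, n},\nabla\phi)$, the reaction $(\kappa\, c_{\ell, n},\phi)$, the time-derivative pairing, and the buoyancy contribution $((1+\alpha c_{\ell, n})\boldsymbol{g},\boldsymbol{v})$, all pass to the limit directly by weak convergence. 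For the viscosity term $\tfrac{1}{K}\tilde{\mu}(c_{\ell, n})\boldsymbol{u}_{\ell, n}$ the truncation is decisive: by \eqref{eq:trunc_viscosity} one has $e^{-R\ell}\le\tilde{\mu}(\cdot)\le e^{R\ell}$ and $\tilde{\mu}$ is globally Lipschitz, so the a.e.\ convergence of $c_{\ell, n}$ together with dominated convergence gives $\tilde{\mu}(c_{\ell, n})\to\tilde{\mu}(c_\ell)$ strongly in $L^s((0,T)\times\Omega)$ for every finite $s$; pairing this strongly convergent, uniformly bounded sequence against the weakly convergent $\boldsymbol{u}_{\ell, n}$ then yields $\tilde{\mu}(c_{\ell, n})\boldsymbol{u}_{\ell, n}\rightharpoonup\tilde{\mu}(c_\ell)\boldsymbol{u}_\ell$ in $L^2((0,T)\times\Omega)$.

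The main obstacle is the nonlinear convective term $(c_{\ell, n}\boldsymbol{u}_{\ell, n},\nabla w_j)$, a product of two sequences only one of which converges strongly. Here I would exploit the strong convergence $c_{\ell, n}\to c_\ell$ in $L^2(0,T;L^4(\Omega))$ furnished by Aubin–Lions together with the weak convergence $\boldsymbol{u}_{\ell, n}\rightharpoonup\boldsymbol{u}_\ell$ in $L^2(0,T;\boldsymbol{L}^4(\Omega))$ (the uniform $\boldsymbol{L}^4$ bound being recorded in Lemma~\ref{lemma 2}). Writing the difference as $(c_{\ell, n}-c_\ell)\boldsymbol{u}_{\ell, n}+c_\ell(\boldsymbol{u}_{\ell, n}-\boldsymbol{u}_\ell)$ and testing against $\psi\nabla w_j\in L^\infty(0,T;\boldsymbol{L}^2(\Omega))$, the first part is controlled by $\|c_{\ell, n}-c_\ell\|_{L^2(0,T;L^4)}\|\boldsymbol{u}_{\ell, n}\|_{L^2(0,T;L^4)}\to 0$, while the second vanishes because $c_\ell\,\psi\nabla w_j\in L^2(0,T;\boldsymbol{L}^{4/3}(\Omega))$ lies in the dual of $L^2(0,T;\boldsymbol{L}^4(\Omega))$. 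This is precisely the step where the earlier $L^4$ interpolation estimates are essential, since mere $L^2(0,T;L^2)$ strong convergence of $c_{\ell, n}$ would be insufficient to control the product against the only weakly convergent velocity.

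Finally, having obtained the limiting identities for all test functions $\psi w_j$, $\psi\boldsymbol{z}_j$, I would invoke the density of $\bigcup_n W_n$ in $H^1(\Omega)$ and of $\bigcup_n \boldsymbol{H}_n$ in $\boldsymbol{H}(\Omega)\cap\boldsymbol{L}^p(\Omega)$, together with the arbitrariness of $\psi\in C_c^\infty(0,T)$, to recover \eqref{weak1-ell}–\eqref{weak2-ell} for a.e.\ $t$ and all admissible test functions. The regularity assertions of Definition~\ref{def:weak solution} follow from the limit function spaces, and the initial condition $c_\ell(0)=c_0$ is identified in $L^2(\Omega)$ by using the continuous embedding into $C([0,T];(H^1(\Omega))^*)$ guaranteed by $\partial_t c_\ell\in L^r(0,T;(H^1(\Omega))^*)$ and passing to the limit in $c_{\ell, n}(0)\to c_0$. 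This identifies $(c_\ell,\boldsymbol{u}_\ell)$ as a weak solution of the $\ell$-approximate problem.
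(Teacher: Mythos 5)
Your proposal is correct and follows the same global architecture as the paper's proof---uniform bounds from Lemmas~\ref{lemma 2} and~\ref{lemma 3}, weak/weak-$*$ compactness, Aubin--Lions, weak--strong pairing in the two nonlinear products, then density of the Galerkin subspaces and localization in $t$---but it differs in the mechanics of the nonlinear limits, and the differences are worth recording. The paper compactifies only to strong convergence of $c_{\ell,n}$ in $L^{r}(0,T;L^{2}(\Omega))$ and then identifies each product \emph{indirectly}: both $\tilde{\mu}(c_{\ell,n})\boldsymbol{u}_{\ell,n}$ and $c_{\ell,n}\boldsymbol{u}_{\ell,n}$ are uniformly bounded in $L^{2}(0,T;\boldsymbol{L}^{2}(\Omega))$ (resp.\ $L^{r}(0,T;\boldsymbol{L}^{2}(\Omega))$), hence weakly convergent to some $\boldsymbol{\zeta}$, while strong-times-weak convergence gives weak $L^{1}(0,T;\boldsymbol{L}^{1}(\Omega))$ convergence to the expected limit, and uniqueness of weak limits forces $\boldsymbol{\zeta}$ to equal that limit; for the viscosity the paper exploits the global Lipschitz continuity of $\tilde{\mu}$ to upgrade $c_{\ell,n}\to c_{\ell}$ to $\tilde{\mu}(c_{\ell,n})\to\tilde{\mu}(c_{\ell})$ in $L^{2}(0,T;L^{2}(\Omega))$. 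You instead run Aubin--Lions with the stronger intermediate space $L^{4}(\Omega)$ (the embedding $H^{1}\hookrightarrow L^{4}$ is indeed compact for $d=2,3$), which lets you treat the convective term \emph{directly}: the splitting $(c_{\ell,n}-c_{\ell})\boldsymbol{u}_{\ell,n}+c_{\ell}(\boldsymbol{u}_{\ell,n}-\boldsymbol{u}_{\ell})$, combined with the uniform $L^{2}(0,T;\boldsymbol{L}^{4}(\Omega))$ velocity bound of Lemma~\ref{lemma 2} and the $L^{2}(0,T;L^{4/3})$--$L^{2}(0,T;L^{4})$ duality, gives a quantitative estimate and dispenses with the limit-identification detour; likewise you obtain $\tilde{\mu}(c_{\ell,n})\to\tilde{\mu}(c_{\ell})$ via a.e.\ convergence plus dominated convergence using the truncation bounds $e^{-R\ell}\le\tilde{\mu}\le e^{R\ell}$, rather than Lipschitzness---either property suffices here, precisely because the truncation makes $\tilde{\mu}$ both bounded and Lipschitz. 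Your write-up also makes explicit two points the paper leaves implicit: attainment of the initial datum through continuity into $(H^{1}(\Omega))^{*}$, and the exact functional-analytic pairing behind the convective limit. One small caveat: your closing claim that ``the regularity assertions of Definition~\ref{def:weak solution} follow from the limit function spaces'' overreaches slightly, since the $L^{\infty}(0,\infty;L^{\infty}(\Omega))$ bound on $c_{\ell}$ is not available at this stage---it is supplied only later by the maximum principle (Lemma~\ref{max principle}); the paper glosses this in the same way, and it does not affect the lemma, which only requires the variational identities \eqref{weak1-ell}--\eqref{weak2-ell} together with the natural energy regularity.
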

\begin{proof}
Now, from the lemmas \ref{lemma 2}, and \ref{lemma 3}, we conclude that there exists a positive constant $ M>0$ independent of $n$, such that 
\begin{align*}
    \|c_{\ell, n}\|_{L^{\infty}(0,T;L^{2}(\Omega))},~ \|c_{\ell, n}\|_{L^{2}(0,T;H^{1}(\Omega))},~ \|\boldsymbol{u}_{\ell, n}\|_{L^{2}(0,T;\boldsymbol{H}(\Omega))},~ \left\|\frac{\partial c_{\ell, n}}{\partial t}\right\|_{L^{2}(0,T;(H^1)^{*})} \leq M.
\end{align*}
Since these sequences are uniformly bounded, by  the Eberlein–Šmulian theorem, there exist  subsequences,
which are again labeled by $c_{\ell, n}$ and $\boldsymbol{u}_{\ell, n}$  such that
%and satisfy the following convergence result.
\begin{center}
$\begin{aligned}
c_{\ell, n} \rightharpoonup c_{\ell} \hspace{15pt} &  \text{weakly} &&\text{in }  L^{2}\left(0,T;H^1(\Omega)\right), \\
\frac{\partial c_{\ell, n}}{\partial t}\rightharpoonup \frac{\partial c_{\ell}}{\partial t}\hspace{12pt} &\text{weakly} && \text{in }  L^{2}\left(0,T;(H^{1}(\Omega))^*\right), \\
\boldsymbol{u}_{\ell, n} \rightharpoonup \boldsymbol{u}_{\ell} \hspace{15pt} &  \text{weakly} &&\text{in }  L^{2}\left(0,T;\boldsymbol{H}(\Omega)\right), \\
\end{aligned}$
\end{center}
\noindent as $ n \rightarrow \infty.$ {While these weak convergence results are sufficient to pass to the limit in the linear terms, the nonlinear terms require strong convergence of the sequence $\{c_n\}$. To establish this, we utilize the Aubin–Lions lemma.}
Now, the Aubin–Lions compactness lemma implies that the limit function satisfies $c_{\ell} \in C\left([0, T]; L^{2}(\Omega)\right)$.
 Since $H^1(\Omega)$ is compactly embedded in $L^2(\Omega)$, therefore, $L^2(0,T; H^1(\Omega)$ 
is compactly embedded in $L^{2}(0, T; L^{2}(\Omega))$ and hence, the sequence
${c_{\ell, n}}$ is relatively compact in $L^{2}(0, T; L^{2}(\Omega))$,  for any $T>0$. Therefore, up to a subsequence (still denoted by ${c_{\ell, n}}$), we obtain strong convergence in $L^{2}(0,T; L^{2}(\Omega))$. 

For passing the limit into the equations \eqref{finite weak 1}-\eqref{finite weak 2}, we fix an $m \in \mathbb{N}$ such that $m \leq n$. Multiplying the discrete equation~\eqref{finite weak 1} by a smooth scalar function $\eta \in C_c^\infty [0, \infty)$ and integrating over time, we obtain that for any $\boldsymbol{v} \in \boldsymbol{H}_{m}$:
\begin{align}\label{passing the limit eq1}
\int_{0}^{T} \int_{\Omega}\frac{\tilde{\mu}( c_{\ell, n})}{K}\boldsymbol{u}_{\ell, n} \cdot\boldsymbol{v}\,  \eta(t) d\boldsymbol{x} dt + \int_{0}^{T} \int_{\Omega} (1 + \alpha\, c_{\ell, n})\boldsymbol{g}\cdot\boldsymbol{v} \, \eta(t) d\boldsymbol{x} dt = 0.
\end{align}
We observe that the sequence \( \left\{ \tilde{\mu}{(c_{\ell, n})} \boldsymbol{u}_{\ell, n} \right\}_{n} \) is uniformly bounded in \( L^2(0, T;\boldsymbol{L}^2(\Omega))\), and therefore there exists a subsequence (still denoted by the same index) and a function \( \boldsymbol{\zeta} \in L^2(0 ,T;\boldsymbol{L}^2(\Omega)) \) such that
\[
\tilde{\mu}{(c_{\ell, n})} \boldsymbol{u}_{\ell, n} \rightharpoonup \boldsymbol{\zeta} \quad \text{weakly in } L^2(0,T;\boldsymbol{L}^2(\Omega)),\quad \text{as } n \to \infty.
\]
As $\tilde{\mu}$, is the Lipschitz function and $c_{\ell, n} \to c_{\ell}$ in $L^2(0,T;L^2(\Omega))$, therefore $\tilde{\mu}(c_{\ell, n}) \to \mu(c_{\ell}) $ in $L^2(0,T;L^2(\Omega))$ as $n \to \infty$. Also $\boldsymbol{u}_{\ell, n} \rightharpoonup \boldsymbol{u}_{\ell}$ weakly in $L^2(0,T;\boldsymbol{L}^2(\Omega))$. {To establish that $\boldsymbol{\zeta} = \tilde{\mu}(c_{\ell}) \boldsymbol{u}_{\ell}$, we first show that $\tilde{\mu}(c_{\ell, n}) \boldsymbol{u}_{\ell, n} \rightharpoonup \tilde{\mu}(c_{\ell}) \boldsymbol{u}_{\ell}$ in $L^1(0,T; \boldsymbol{L}^1(\Omega))$ as $n \to \infty$. For any test function $\boldsymbol{\phi} \in L^{\infty}(0,T; \boldsymbol{L}^{\infty}(\Omega))$, we consider the following decomposition:
\begin{align*}
 \int_0^T \int_{\Omega} \left( \frac{\tilde{\mu}( c_{\ell, n})}{K} \boldsymbol{u}_{\ell, n} - \frac{\tilde{\mu}( c_{\ell})}{K} \boldsymbol{u}_{\ell}  \right)   \boldsymbol{\phi} & = \int_0^T \int_{\Omega} \left( \frac{\tilde{\mu}( c_{\ell, n})}{K} \boldsymbol{u}_{\ell, n} - \frac{\tilde{\mu}( c_{\ell})}{K} \boldsymbol{u}_{\ell,n}  \right)\boldsymbol{\phi} + \int_0^T \int_{\Omega} \left( \frac{\tilde{\mu}( c_{\ell})}{K} \boldsymbol{u}_{\ell,n} - \frac{\tilde{\mu}( c_{\ell})}{K} \boldsymbol{u}_{\ell}  \right)\boldsymbol{\phi} \\ & \leq \frac{1}{K}\|\tilde{\mu}(c_{\ell, n}) - \tilde{\mu}( c_{\ell})\|_{L^2(0,T;L^2(\Omega))} \|\boldsymbol{u}_{\ell, n}\boldsymbol{\phi}\|_{L^2(0,T;\boldsymbol{L}^2(\Omega))} + \int_0^T \int_{\Omega} \left(  \boldsymbol{u}_{\ell,n} -  \boldsymbol{u}_{\ell}  \right)\frac{\tilde{\mu}( c_{\ell})}{K}\boldsymbol{\phi}
\end{align*}
First term vanishes as $n \to \infty$ because $\tilde{\mu}(c_{\ell, n}) \to \tilde{\mu}(c_{\ell})$ strongly in $L^2(0,T; L^2(\Omega))$ and the sequence $\|\boldsymbol{u}_{\ell, n} \cdot \boldsymbol{\phi}\|_{L^2(0,T; \boldsymbol{L}^2(\Omega))}$ is uniformly bounded with respect to $n$. For the second term, we observe that $\frac{\tilde{\mu}( c_{\ell})}{K} \boldsymbol{\phi} \in L^2(0,T; \boldsymbol{L}^2(\Omega))$. Since $\boldsymbol{u}_{\ell, n} \rightharpoonup \boldsymbol{u}_{\ell}$ weakly in $L^2(0,T; \boldsymbol{L}^2(\Omega))$, this term also tends to zero as $n \to \infty$ by the definition of weak convergence. Therefore, taking the limit as $n \to \infty$, we conclude that:
}
\[
\frac{\tilde{\mu}( c_{\ell, n})}{K} \boldsymbol{u}_{\ell, n} \rightharpoonup \frac{\tilde{\mu}( c_{\ell})}{K} \boldsymbol{u}_{\ell} \quad \text{ weakly in }~ L^1(0,T;\boldsymbol{L}^1(\Omega)) \text{ as } n \to \infty.
\]
By the uniqueness of weak limits, we conclude that the weak $L^2(0,T;\boldsymbol{L}^2(\Omega))$ limit $\boldsymbol{\zeta}$ must coincide with the $L^1(0,T;\boldsymbol{L}^1(\Omega))$ limit. Consequently,
\[
\frac{\tilde{\mu}( c_{\ell, n})}{K} \boldsymbol{u}_{\ell, n} \rightharpoonup \frac{\tilde{\mu}( c_{\ell})}{K} \boldsymbol{u}_{\ell} \quad \text{weakly in } L^2(0,T;\boldsymbol{L}^2(\Omega)).
\]
Now, we pass the limit in the equation \eqref{passing the limit eq1}, to conclude
\begin{align*}
\int_{0}^{T} \int_{\Omega}\frac{\tilde{\mu}( c_{\ell})}{K}\boldsymbol{u}_{\ell}\cdot\boldsymbol{v} \, \eta(t) d\boldsymbol{x} dt + \int_{0}^{T} \int_{\Omega} (1 + \alpha\, c_{\ell})\boldsymbol{g}\cdot\boldsymbol{v} \, \eta(t) d\boldsymbol{x} dt = 0,
\end{align*}
for all $\boldsymbol{v} \in \boldsymbol{H}_m$ and $\eta \in C_c^\infty [0, \infty)$. Since this equation holds for all $m \in \mathbb{N}$, and the set $\bigcup_{m \geq 1} \boldsymbol{H}_{m}$ is dense in $\boldsymbol{H}$, the equality extends by a standard density argument to all $\boldsymbol{v} \in \boldsymbol{H}$. Finally, by the Fundamental Lemma of Calculus of Variations, this implies that the integrand must be zero for almost every $t>0$:
\begin{align}\left(\frac{\tilde{\mu}( c_{\ell}(t))}{K}\boldsymbol{u}_{\ell}(t),\boldsymbol{v}\right) +  \left((1+\alpha\,c_{\ell}(t))\boldsymbol{g},\boldsymbol{v}\right)=0 \quad \forall \, \boldsymbol{v} \in \boldsymbol{H}, \quad \text{a.e. on } (0, \infty).
\end{align}
From Lemma~\ref{lemma 2}, the sequence $\{c_{\ell, n} \boldsymbol{u}_{\ell, n}\}_n$ is uniformly bounded in $L^{2}(0,T;\boldsymbol{L}^2(\Omega))$. Consequently, there exists a subsequence (still denoted by $c_{\ell, n}\boldsymbol{u}_{\ell, n}$) that converges weakly to some limit $\boldsymbol{\zeta}$ in $L^{2}(0,T;\boldsymbol{L}^2(\Omega))$. {To prove $\boldsymbol{\zeta} = c_{\ell}\boldsymbol{u}_{\ell}$, we first prove $c_{\ell, n} \boldsymbol{u}_{\ell, n} \rightharpoonup c_{\ell} \boldsymbol{u}_{\ell}$ in $L^1(0,T; \boldsymbol{L}^1(\Omega))$ as $n \to \infty$. For any test function $\boldsymbol{\phi} \in L^{\infty}(0,T; \boldsymbol{L}^{\infty}(\Omega))$, we utilize the following identity:
\begin{align*}
\int_0^T \int_{\Omega} (c_{\ell, n} \boldsymbol{u}_{\ell, n} - c_{\ell} \boldsymbol{u}_{\ell}) \cdot \boldsymbol{\phi} t &= \int_0^T \int_{\Omega} (c_{\ell, n} - c_{\ell}) \boldsymbol{u}_{\ell, n} \cdot \boldsymbol{\phi} + \int_0^T \int_{\Omega} (\boldsymbol{u}_{\ell, n} - \boldsymbol{u}_{\ell}) \cdot (c_{\ell} \boldsymbol{\phi}) \\ & \leq \|c_{\ell, n} - c_{\ell}\|_{L^2(0,T; L^2(\Omega))} \|\boldsymbol{u}_{\ell, n} \cdot \boldsymbol{\phi}\|_{L^2(0,T; \boldsymbol{L}^2(\Omega))} +  \int_0^T \int_{\Omega} (\boldsymbol{u}_{\ell, n} - \boldsymbol{u}_{\ell}) \cdot (c_{\ell} \boldsymbol{\phi}) 
\end{align*}
First term on the right hand side of the above inequality tends to zero as $n \to \infty$ because $c_{\ell, n} \to c_{\ell}$ strongly in $L^2(0,T; L^2(\Omega))$ and the sequence $\|\boldsymbol{u}_{\ell, n} \cdot \boldsymbol{\phi}\|_{L^2(0,T; \boldsymbol{L}^2(\Omega))}$ remains uniformly bounded. For the second integral, we note that since $c_{\ell} \in L^2(0,T; L^2(\Omega))$ and $\boldsymbol{\phi} \in L^\infty(0,T; \boldsymbol{L}^\infty(\Omega))$, the product $c_{\ell} \boldsymbol{\phi}$ belongs to $L^2(0,T; \boldsymbol{L}^2(\Omega))$. Given that $\boldsymbol{u}_{\ell, n} \rightharpoonup \boldsymbol{u}_{\ell}$ weakly in $L^2(0,T; \boldsymbol{L}^2(\Omega))$, the integral vanishes as $n \to \infty$ by the definition of weak convergence. Consequently, we obtain:$$ \lim_{n \to \infty} \int_0^T \int_{\Omega} c_{\ell, n} \boldsymbol{u}_{\ell, n} \cdot \boldsymbol{\phi} \, dxdt = \int_0^T \int_{\Omega} c_{\ell} \boldsymbol{u}_{\ell} \cdot \boldsymbol{\phi} \, dxdt, $$which confirms the desired weak convergence in $L^1(0,T; \boldsymbol{L}^1(\Omega))$.} By the uniqueness of the weak limit, we identify $\boldsymbol{\zeta} = c_{\ell}\boldsymbol{u}_{\ell}$. Equipped with these convergence results, we can pass to the limit in the discrete convection-diffusion equation \eqref{finite weak 2}. This establishes that for each fixed $\ell \in \mathbb{N}$, the pair $(c_{\ell}, \boldsymbol{u}_{\ell})$ is a weak solution to the $\ell$-approximate problem \eqref{weak1-ell}-\eqref{weak2-ell}.
\end{proof} 

Next, we establish a maximum principle for the concentration $c_{\ell}$. This result is crucial for deriving uniform bounds for the pair $\{(c_{\ell}, \boldsymbol{u}_\ell)\}_{\ell}$. Subsequently, utilizing weak and weak$^*$ compactness arguments, we extract a convergent subsequence and demonstrate that its limit as $\ell \to \infty$ constitutes a weak solution to the original problem.

%%%%%%%%%%%%%%%%%%%%%%%%%%%%%%%%%%%%%%%%%%%%%%%%%%%%%%%%%%%%%%%%%%%%%%%%%%%%%%%%%%%
%%%%%%%%%%%%%%%%%%%%%%%%%%%%%%%%%%%%%%%%%%%%%%%%%%%%%%%%%%%%%%%%%%%%%%%%%%%%%%%%%%%%%%
\begin{lem}[Maximum Principle for Concentration] \label{max principle}
Let $(c_{\ell}, \boldsymbol{u}_{\ell})$ be a weak solution to the $\ell$-approximate problem. Suppose the initial data satisfies $0 \leq c_{0}(\boldsymbol{x}) \leq M_2$ for almost every $\boldsymbol{x} \in \Omega$. Then, the concentration $c_{\ell}(t, \boldsymbol{x})$ satisfies
\[
0 \leq c_{\ell}(t,\boldsymbol{x}) \leq M_{2}, \quad \text{ for almost every } (t, \boldsymbol{x}) \in (0,\infty) \times \Omega.
\]
\end{lem}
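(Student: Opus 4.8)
The plan is to establish both inequalities by the Stampacchia truncation method, testing the weak equation \eqref{weak2-ell} against suitable truncations of $c_{\ell}$ and exploiting three structural features of the system: that $\boldsymbol{u}_{\ell}$ is divergence-free with vanishing normal trace ($\boldsymbol{\nabla}\cdot\boldsymbol{u}_{\ell}=0$, $\boldsymbol{u}_{\ell}\cdot\boldsymbol{\nu}=0$), that the diffusion term is coercive, and that $\kappa\ge\kappa_1>0$. Both truncations belong to $H^1(\Omega)$ for almost every $t$ (since $c_{\ell}(t)\in H^1$ and truncation preserves $H^1$), so they are admissible test functions.

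For the lower bound I would take $\phi = c_{\ell}^{-}:=\max\{-c_{\ell},0\}$. The decisive point is that the convective term vanishes: using $c_{\ell}\,\boldsymbol{\nabla}c_{\ell}^{-}=-\tfrac12\boldsymbol{\nabla}(c_{\ell}^{-})^2$ and integrating by parts, the boundary term drops by $\boldsymbol{u}_{\ell}\cdot\boldsymbol{\nu}=0$ and the volume term by $\boldsymbol{\nabla}\cdot\boldsymbol{u}_{\ell}=0$, so $(c_{\ell}\boldsymbol{u}_{\ell},\boldsymbol{\nabla}c_{\ell}^{-})=0$. The diffusion term equals $-D\|\boldsymbol{\nabla}c_{\ell}^{-}\|_{L^2}^2\le 0$, the reaction term is $(\kappa\,c_{\ell},c_{\ell}^{-})=-\int_{\Omega}\kappa\,(c_{\ell}^{-})^2\le 0$, and the time term yields $-\tfrac{(1+k)}{2}\frac{d}{dt}\|c_{\ell}^{-}\|_{L^2}^2$. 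Collecting terms gives $\frac{d}{dt}\|c_{\ell}^{-}\|_{L^2}^2\le 0$; since $c_0\ge 0$ forces $c_{\ell}^{-}(0)=0$, monotonicity yields $c_{\ell}^{-}\equiv 0$, i.e. $c_{\ell}\ge 0$.

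For the upper bound I would test with $\phi=(c_{\ell}-M_2)^{+}:=\max\{c_{\ell}-M_2,0\}$. Writing $c_{\ell}=(c_{\ell}-M_2)+M_2$ with $\boldsymbol{\nabla}M_2=0$, the convective contribution splits into $\tfrac12\boldsymbol{\nabla}\big((c_{\ell}-M_2)^{+}\big)^2$ and $M_2\,\boldsymbol{\nabla}(c_{\ell}-M_2)^{+}$, both of which integrate to zero against $\boldsymbol{u}_{\ell}$ by the same divergence-free/no-flux argument. The diffusion term is $D\|\boldsymbol{\nabla}(c_{\ell}-M_2)^{+}\|_{L^2}^2\ge 0$, and on $\{c_{\ell}>M_2\}$ we have $c_{\ell}>M_2\ge 0$, so $\int_{\Omega}\kappa\,c_{\ell}\,(c_{\ell}-M_2)^{+}\ge 0$. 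Hence $\frac{(1+k)}{2}\frac{d}{dt}\|(c_{\ell}-M_2)^{+}\|_{L^2}^2\le 0$; since $c_0\le M_2$ gives $(c_{\ell}-M_2)^{+}(0)=0$, we conclude $(c_{\ell}-M_2)^{+}\equiv 0$, i.e. $c_{\ell}\le M_2$.

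The main obstacle is the rigorous justification of the chain-rule identity $\langle\partial_t c_{\ell},c_{\ell}^{-}\rangle=-\tfrac12\frac{d}{dt}\|c_{\ell}^{-}\|_{L^2}^2$ (and its analogue for $(c_{\ell}-M_2)^{+}$), because $c_{\ell}$ enjoys only the limited time regularity $\partial_t c_{\ell}\in L^{r}(0,T;(H^1)^{*})$ with $r=4/3$ in three dimensions, so the duality pairing against an $H^1$-valued truncation must be handled carefully. I would resolve this by exploiting the continuity $c_{\ell}\in C([0,T];L^2(\Omega))$ established in Lemma~\ref{Passage to limit for n} and regularizing in time (e.g. Steklov averaging or mollification of $c_{\ell}$), applying the classical chain rule to the smooth approximants and passing to the limit, using the Lipschitz/convexity structure of the truncation map to pass the nonlinear terms. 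This yields the absolute continuity of $t\mapsto\|c_{\ell}^{-}(t)\|_{L^2}^2$ (respectively $t\mapsto\|(c_{\ell}-M_2)^{+}(t)\|_{L^2}^2$) needed to close the monotonicity argument.
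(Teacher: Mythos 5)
Your proof is essentially the paper's own argument: the paper likewise applies the truncation method to \eqref{weak2-ell}, testing with $\min\{0,c_{\ell}\}$ for the lower bound and $\max\{0,c_{\ell}-M_2\}$ for the upper bound, annihilating the convective term via $\boldsymbol{\nabla}\cdot\boldsymbol{u}_{\ell}=0$ and $\boldsymbol{u}_{\ell}\cdot\boldsymbol{\nu}=0$, dropping the nonnegative diffusion and reaction contributions, and integrating in time from the vanishing initial truncation. If anything, your write-up is more rigorous than the paper's, which silently absorbs the $M_2$-shift in the convective and reaction terms (writing $\left(\kappa\,\tilde{c},\tilde{c}\right)$ where the exact term is $\left(\kappa\,c_{\ell},(c_{\ell}-M_2)^{+}\right)$, nonnegative precisely because $c_{\ell}>M_2\geq 0$ on the active set, as you note) and never justifies the chain-rule identity $\langle\partial_t c_{\ell},\phi\rangle=\pm\tfrac12\tfrac{d}{dt}\|\phi\|_{L^2}^2$ under the limited time regularity $\partial_t c_{\ell}\in L^{4/3}(0,T;(H^1(\Omega))^{\ast})$ in three dimensions, a genuine technical point that your Steklov-averaging argument addresses.
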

\begin{proof}
To establish the non-negativity of the concentration, we first define $\tilde{c}_{\ell} = \min\{0, c_{\ell}\}$. {Let $f(s)=\min\{0,s\}$. Then $f$ is Lipschitz continuous with $f(0)=0$. Since $c_\ell \in H^1(\Omega)$ and $\Omega$ is a bounded Lipschitz domain, it follows from \textbf{Theorem 2.2.5 (Stampacchia)} and \textbf{Remark 2.2.1} in Kesavan \cite{kesavan1989topics} that
$$
\tilde{c}_\ell = f(c_\ell) \in H^1(\Omega).
$$
Thus $\tilde{c}_\ell$ is an admissible test function.}
Substituting the test function $\phi = \tilde{c}_{\ell}$ into \eqref{weak2-ell}, we obtain
\begin{align}\label{maxeq1}
\frac{k+1}{2} \frac{d}{dt} \nr{\tilde{c}_{\ell}(t)} + \br{\boldsymbol{u}_{\ell}\cdot \boldsymbol{\nabla}\tilde{c}_{\ell}}{\tilde{c}_{\ell}} + D \nr{\boldsymbol{\nabla}\tilde{c}_{\ell}(t)} +\br{\kappa\, \tilde{c}_{\ell}}{\tilde{c}_{\ell}}  = 0.
\end{align}
The second term on the left-hand side vanishes using integration by parts and divergence free condition with boundary condition for $\boldsymbol{u}_{\ell}.$ Dropping the non-negative terms from the left-hand side of the above inequality and integrating in time from $0$ to $\tau \in (0,\infty)$, we obtain
\begin{align*}
 \nr{\tilde{c}_{\ell}(\tau)} \leq  \nr{\tilde{c}_{\ell}(0)} = \nr{\tilde{c}(0)}.
\end{align*}
Since the initial data satisfies $c_0 \ge 0$, it follows that $\tilde{c}(0) = \min\{0, c_0\} = 0$. Consequently, we get $\|\tilde{c}(\tau)\| = 0$ for all $\tau \in (0,\infty)$, which implies that $c(t, \boldsymbol{x}) \geq 0$ for almost every $(t, \boldsymbol{x}) \in (0,\infty) \times \Omega$. Next, to prove $c_{\ell}$ is upper bounded we define $\tilde{c}_{\ell} = \max\{0,\, c_{\ell} - M_{2}\}$. Then, using $\phi = \tilde{c}$ as a test function in \eqref{weak2-ell} we obtain
\begin{align*}
\frac{k+1}{2} \frac{d}{dt} \nr{\tilde{c}_{\ell}(t)} + \br{\boldsymbol{u}_{\ell}\cdot \boldsymbol{\nabla}\tilde{c}_{\ell}}{\tilde{c}_{\ell}}  + D\nr{\boldsymbol{\nabla}\tilde{c}} + \br{\kappa\, \tilde{c}} { \tilde{c} } = 0.
\end{align*}
Once again, the convection term vanishes due to integration by parts and the divergence free condition with boundary conditions for $\boldsymbol{u}_{\ell}.$ Then, dropping the non-negative terms from the left-hand side and integrating with respect to time from $0$ to $t \in (0,\infty)$, we obtain:
\begin{align*}
 \nr{\tilde{c}_{\ell}(t)}  \leq  \nrm{\tilde{c}_{\ell}(0)}=\nrm{\tilde{c}(0)}, \ \text{for all } t \in(0,\infty).
\end{align*}
The initial data $c_0 \le M_2$ implies $\tilde{c}(0) = \max\{0, c_0 - M_2\} = 0$. Using this in the above inequality, we get  $\tilde{c}_{\ell}(t) = 0$ for all $t \in (0,\infty)$, which implies that $c_{\ell}(t, \boldsymbol{x}) \leq M_2$ a.e in $(0,\infty) \times \Omega$. This concludes the proof of the maximum principle.
\end{proof}
\begin{remark}\label{remark:max-min}
In the nonreactive case (i.e., $\kappa = 0$), if the initial data satisfies $0 < M_1 \leq c_{0}(\boldsymbol{x}) \leq M_2$ almost every $\boldsymbol{x} \in \Omega$, then the lower bound is preserved. Specifically, by employing the arguments of Lemma \ref{max principle} with the test function $\tilde{c}_{\ell} = \min\{0, c_{\ell} - M_1\}$, one can show that
\[
c_{\ell}(t, \boldsymbol{x}) \geq M_1 \quad \text{for a.e. } (t, \boldsymbol{x}) \in (0, \infty) \times \Omega.
\]
\end{remark}

\noindent \textbf{ Proof of Theorem \ref{thm21}:} 
Now, for each fixed $\ell \in \mathbb{N}$, let $(c_{\ell}, \boldsymbol{u}_{\ell})$ be a solution to the $\ell$-approximate problem. Lemma~\ref{max principle} along with the definition of $\tilde{\mu}$ shows \begin{align}\label{viscosity uniform bound}
e^{-R\,M_2} \leq  \tilde{\mu}(c_{\ell}) \leq   e^{R\,M_2} \quad \text{a.e. on } (0,\infty)\times \Omega.
\end{align} 
After revisiting the inequality \eqref{velocity estimate} for $\boldsymbol{u}_{\ell}$, we arrive at
\begin{align*}
 \|\boldsymbol{u}_{\ell}  \|_{L^p}  \leq M \left\|\frac{K}{\tilde{\mu}(c_{\ell})}(1 + \alpha\,c_{\ell})\boldsymbol{g}  \right\|_{L^p} \quad \text{ for all } p \in [2,\infty).
\end{align*}
A  use of the  Lemma \ref{max principle} with \eqref{viscosity uniform bound} yields
\begin{align}
 \|\boldsymbol{u}_{\ell}  \|_{L^p}  \leq  M K e^{-R\,M_2} (1+\alpha\, M_2) \norm{\boldsymbol{g}} |\Omega|, \quad \text{ for all } p \in [2,\infty) \text{ and }\ell \in \mathbb{N},
\end{align}
and hence, the sequence $\{\boldsymbol{u}_{\ell}\}_{\ell=1}^{\infty}$ is uniformly bounded in $L^{\infty}(0,T;\boldsymbol{L}^p(\Omega))$ for all $p\in [2,\infty)$. Now by repeating the arguments of Lemmas \ref{lemma 2} and \ref{lemma 3} for $c_{\ell}$, and utilizing the bound $0 \leq c_{\ell} \leq M_2$, we obtain the following uniform estimates: 
\begin{align*}
    \|c_{\ell}\|_{L^{\infty}(0,T;L^{\infty}(\Omega))},~ \|c_{\ell}\|_{L^{2}(0,T;H^{1}(\Omega))},~  \left\|\frac{\partial c_{\ell}}{\partial t}\right\|_{L^{2}(0,T;(H^1)^{*})} \leq M.
\end{align*}
As before, using the uniform bounds and applying the Eberlein-Smulian Theorem on weak and weak star compactness with compact embedding, we deduce that $c_{\ell} \to c$ strongly in $L^{2}(0, T; L^{2}(\Omega))$. Finally, we pass to the limit as $\ell \to \infty$ in equations \eqref{weak1-ell} and \eqref{weak2-ell} to establish that the limit pair $(c,\boldsymbol{u})$ is a solution to the original problem. Passing to the limit is straightforward for most terms, except for the viscosity term in the Darcy equation. To handle this term, we first prove that $\tilde{\mu}(c_{\ell}) \to \mu(c)$ strongly in $L^2(0,T;L^2(\Omega))$. By applying the triangle inequality, we obtain:
\begin{align}\label{viscosity triangle}
    \nrm{\tilde{\mu}(c_{\ell}) - \mu(c_{\ell})} \leq \nrm{\tilde{\mu}(c_{\ell}) - \mu(c_{\ell})} + \nrm{\mu(c_{\ell}) - \mu(c)}.
\end{align}
Since $0 \leq c_{\ell}(t,\boldsymbol{x}) \leq M_2$ almost everywhere in $(0,T)\times \Omega$, the first term in the inequality above vanishes identically for any $\ell \ge M_2$. Moreover, because $c_{\ell} \to c$ in $L^2(0,T;L^2(\Omega))$ and $\mu$ is a Lipschitz function, it follows that $\mu(c_{\ell}) \to \mu(c)$ in $L^2(0,T;L^2(\Omega))$ as $\ell \to \infty$. Combining these results in inequality \eqref{viscosity triangle}, we establish that $\tilde{\mu}(c_{\ell}) \to \mu(c)$ strongly in $L^2(0,T;L^2(\Omega))$. Next, since $\tilde{\mu}(c_{\ell})$ is uniformly bounded in $L^{\infty}(0,T;L^{\infty}(\Omega))$, the product $\tilde{\mu}(c_{\ell})\boldsymbol{u}_{\ell}$ is uniformly bounded in $L^2(0,T;\boldsymbol{L}^2(\Omega))$. Consequently, there exists a subsequence such that $\tilde{\mu}(c_{\ell})\boldsymbol{u}_{\ell} \rightharpoonup \zeta$ weakly in $L^2(0,T;\boldsymbol{L}^2(\Omega))$. Since $\tilde{\mu}(c_{\ell}) \to \mu(c)$ strongly in $L^2(0,T;L^2(\Omega))$ and $\boldsymbol{u}_{\ell} \rightharpoonup \boldsymbol{u}$ weakly in $L^2(0,T;\boldsymbol{L}^2(\Omega))$, the product again converges to $\mu(c) \boldsymbol{u}$ weakly in $L^1(0,T;\boldsymbol{L}^1(\Omega))$. By the uniqueness of weak limits, we identify $\zeta = \mu(c) \boldsymbol{u}$. With this convergence established, we pass to the limit $\ell \to \infty$ in equations \eqref{weak1-ell} and \eqref{weak2-ell}, thereby concluding the proof of Theorem \ref{thm21}.

%%%%%%%%%%%%%%%%%%%%%%%%%%%%%%%%%%%%%%%%%%%%%%%%%%%%%%%%%%
\section{Asymptotic Behavior as $t \to \infty$}\label{asym}
This section deals with the asymptotic behavior of the concentration $c$ as $t\to \infty.$
\begin{thm}\label{thm:asymptotic}
 Suppose $(c, \boldsymbol{u})$ is a solution to the system \eqref{model}. Then  for all $t \geq 0$:
 $$\|c(t)\|_{L^p(\Omega)} \leq \|c_0\|_{L^1}^{1/p} \, \| c_0\|_{L^{\infty}}^{1 - 1/p}\, e^{-\frac{\kappa_1}{k+1} t}, \quad \forall p \in [1, \infty].$$
\end{thm}
\begin{proof}  
 By choosing $\phi = 1$ as a test function in \eqref{weak2}, the diffusion and convection terms vanish due to the divergence-free constraint and boundary conditions, which shows
  \begin{align*} (1+k) \int_{\Omega} \frac{\partial c}{\partial t} \,dx + \int_{\Omega} \kappa\, c(t) \,dx=0,%\\ & \geq - \kappa_1 \int_{\Omega}  c(t) \,dx,
  \end{align*}
since  $c \ge 0$ and $\kappa \ge \kappa_1>0.$ 
Now, we rewrite it as
  \begin{align*}(k+1) \frac{d}{dt}\|c(t)\|_{L^1} + \kappa_1 \|c(t)\|_{L^1} \leq 0, \quad \text{ for all } \ t \geq 0,  
  \end{align*}
  and hence, 
  %An application of Grönwall’s inequality yields
  \begin{align}\label{Asym 2}
  \|c(t)\|_{L^1} \leq \|c(0)\|_{L^1} ~e^{-\frac{\kappa_1}{k+1}\,t}, \quad \text{ for all } \ t \geq 0.    \end{align}
 This implies that the spatial average concentration converges to zero as $t \to \infty$. 
 
 Now to prove exponential convergence in $L^{\infty}$-norm, let $\lambda = \frac{\kappa_1}{k+1}$ and define the auxiliary function $w(t,\boldsymbol{x}) = e^{\lambda t} c(t,\boldsymbol{x})$. A direct substitution of $c = w e^{-\lambda t}$ into equation \eqref{weak2} shows
  \begin{align*}
  (k+1)\innerproduct{\frac{\partial w} {\partial t} - \lambda w,\phi} + \br{w\boldsymbol{u}}{\boldsymbol{\nabla}\phi} + D \br{\boldsymbol{\nabla}w}{\boldsymbol{\nabla} \phi} + \br{\kappa w}{\phi} = 0.
  \end{align*}
  Rearranging and using $\lambda(k+1) = \kappa_1$, we arrive at
  \begin{align*}
  (k+1)\innerproduct{\frac{\partial w} {\partial t},\phi} + \br{w\boldsymbol{u}}{\boldsymbol{\nabla}\phi} + D \br{\boldsymbol{\nabla}w}{\boldsymbol{\nabla} \phi} + \br{(\kappa - \kappa_1)w}{\phi} = 0.
  \end{align*}
Now using the test function $\phi = \max\{0, \|w(0)\|_{L^{\infty}} \}$ and following the similar arguments as in Lemma \ref{max principle}, we conclude 
  \begin{align*}
  \| w(t)\|_{L^{\infty}(\Omega)} \leq \| w(0)\|_{L^{\infty}(\Omega)} = \| c(0)\|_{L^{\infty}(\Omega)}.
  \end{align*}
  Substituting the definition of $w$, we arrive at the following exponential decay estimate in the $L^\infty$ norm:
  \begin{align}\label{Asym 3}
  \| c(t)\|_{L^{\infty}(\Omega)} \leq \| c(0)\|_{L^{\infty}(\Omega)}\, e^{-\frac{\kappa_1}{k+1}\,t}.
  \end{align}
Finally, to obtain the $L^p$ decay estimates, we employ the interpolation inequality for $p \in (1, \infty)$:
\begin{equation*}
\|c(t)\|_{L^p} \leq \|c(t)\|_{L^1}^{1/p} \|c(t)\|_{L^\infty}^{1-1/p}.
\end{equation*}
  Now, we use the decay estimates from \eqref{Asym 2} and \eqref{Asym 3}:
  \begin{align*}
  \|c(t)\|_{L^p} &\leq \left( \|c(0)\|_{L^1} ~e^{-\frac{\kappa_1}{k+1}\,t} \right)^{1/p} \left( \| c(0)\|_{L^{\infty}(\Omega)}\, e^{-\frac{\kappa_1}{k+1},t} \right)^{1/q}  
  \\ &= \|c(0)\|_{L^1}^{1/p} \, \| c(0)\|_{L^{\infty}}^{1/q} \, e^{-\frac{\kappa_1}{k+1}\, (\frac{1}{p} + \frac{1}{q})\, t} \\&= \|c(0)\|_{L^1}^{1/p} \, \| c(0)\|_{L^{\infty}}^{1/q} \, e^{-\frac{\kappa_1}{k+1}\,t}.
  \end{align*}
  This completes the proof of the Theorem.
\end{proof}

%%%%%%%%%%%%%%%%%%%%%%%%  Continuous Dependence %%%%%%%%%%%%%%%%%%%%%%%%%%%%%%%%%%%%%%%%%%%%%%%%%%%%%%%%%%%

\subsection{ Uniqueness of Solution }\label{uniqueness}
In this section, we establish the uniqueness of weak solutions to the system \eqref{model1}--\eqref{initial condition assumptation}. Recalling the velocity relation and utilizing the lower bound $\mu(c) \geq 1$, we have the following estimate:
\begin{align}\label{eq:velocity estimate uniqueness}
\|\boldsymbol{u}(t)\|_{L^{2+\delta}} &\leq \left\| \frac{\alpha\,K}{\mu(c)} c(t)\, \boldsymbol{g}\right\|_{L^{2+\delta}}\nonumber \\ 
& \leq M \|c(t)\|_{L^{\infty}},    
\end{align}   
where $M = \alpha K |\Omega|^{1/(2+\delta)} \|\boldsymbol{g}\|_{L^{\infty}(\Omega)}$, and $|\Omega|$ denotes the Lebesgue measure of the domain.

\begin{thm}
The problem \eqref{model} has at most one weak solution.
\end{thm}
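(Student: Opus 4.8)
The plan is a Grönwall argument on the difference of two solutions, organised so that the only-locally-Lipschitz viscosity and the missing regularity of the Darcy velocity are both controlled. Suppose $(c_1,\boldsymbol{u}_1)$ and $(c_2,\boldsymbol{u}_2)$ are two weak solutions sharing the initial datum $c_0$, and set $c:=c_1-c_2$, $\boldsymbol{u}:=\boldsymbol{u}_1-\boldsymbol{u}_2$. By the maximum principle (Lemma~\ref{max principle}) both $c_1,c_2$ take values in $[0,M_2]$, so $\mu$ restricted to this range is Lipschitz with constant $L:=R\,e^{R M_2}$ and satisfies the coercivity $\mu(c_i)\ge 1$. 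The goal is to bound $\tfrac{d}{dt}\nr{c}$ by a constant multiple of $\nr{c}$; since the two fields are coupled, I would first control $\boldsymbol{u}$ in terms of $c$.

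\textbf{Velocity estimate.} Subtracting the two copies of \eqref{weak1} and splitting $\mu(c_1)\boldsymbol{u}_1-\mu(c_2)\boldsymbol{u}_2=\mu(c_1)\boldsymbol{u}+(\mu(c_1)-\mu(c_2))\boldsymbol{u}_2$, I would test with $\boldsymbol{v}=\boldsymbol{u}\in\boldsymbol{H}(\Omega)$, which is admissible since $\boldsymbol{u}_1,\boldsymbol{u}_2\in\boldsymbol{H}(\Omega)$. The coercivity gives $\tfrac1K\nr{\boldsymbol{u}}\le \tfrac1K\br{\mu(c_1)\boldsymbol{u}}{\boldsymbol{u}}$, while the remaining terms are estimated by Hölder's inequality (splitting $1=\tfrac14+\tfrac14+\tfrac12$) to obtain
$$
\nrm{\boldsymbol{u}}\le L\,\|\boldsymbol{u}_2\|_{L^4}\,\|c\|_{L^4}+\alpha K\,\norm{\boldsymbol{g}}\,\nrm{c}.
$$
Because the analysis preceding the theorem yields $\boldsymbol{u}_2\in L^{\infty}(0,T;\boldsymbol{L}^4(\Omega))$, this reads $\nrm{\boldsymbol{u}}\le C\big(\|c\|_{L^4}+\nrm{c}\big)$ with $C$ independent of $t$.

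\textbf{Concentration estimate.} Subtracting the two copies of \eqref{weak2} and testing with $\phi=c$ (justified since $c\in L^2(0,T;H^1(\Omega))$ and, thanks to the $L^\infty$ bound, $\partial_t c\in L^2(0,T;(H^1)^\ast)$, so $t\mapsto\nr{c}$ is absolutely continuous), the time term gives $\tfrac{1+k}{2}\tfrac{d}{dt}\nr{c}$ and the reaction term is non-negative. The convective difference splits as $c_1\boldsymbol{u}_1-c_2\boldsymbol{u}_2=c_1\boldsymbol{u}+c\,\boldsymbol{u}_2$; the contribution $\br{c\,\boldsymbol{u}_2}{\nabla c}=\tfrac12\int_\Omega \boldsymbol{u}_2\cdot\nabla(c^2)$ vanishes after integration by parts, using $\nabla\cdot\boldsymbol{u}_2=0$ and $\boldsymbol{u}_2\cdot\boldsymbol{\nu}=0$ (valid as $c^2\in H^1$ and $\boldsymbol{u}_2\in\boldsymbol{H}(\Omega)$), while $\br{c_1\boldsymbol{u}}{\nabla c}$ is controlled using $\|c_1\|_{L^\infty}\le M_2$. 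Young's inequality then produces
$$
\frac{1+k}{2}\frac{d}{dt}\nr{c}+\frac{D}{2}\nr{\nabla c}\le \frac{M_2^2}{2D}\,\nr{\boldsymbol{u}}.
$$

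\textbf{Closing the loop.} Inserting the velocity bound and the Gagliardo--Nirenberg inequality (Theorem~\ref{galirdo}), $\|c\|_{L^4}^2\le M\,\nrm{c}^{\,2-d/2}\|c\|_{H^1}^{\,d/2}$, I would apply Young's inequality with a small parameter $\epsilon$ to isolate $\|c\|_{H^1}^2$ with an arbitrarily small coefficient: in $d=2$ this uses the conjugate pair $(2,2)$ and in $d=3$ the pair $(4/3,4)$. Choosing $\epsilon$ small, the resulting $\nr{\nabla c}$ is absorbed into the diffusion term on the left, leaving $\tfrac{d}{dt}\nr{c}\le C'\,\nr{c}$; since $c(0)=c_1(0)-c_2(0)=0$, Grönwall's inequality forces $c\equiv0$ on $(0,T)$, and the velocity estimate then gives $\boldsymbol{u}\equiv0$. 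The main obstacle is exactly this absorption: because the Darcy system supplies no control of $\nabla\boldsymbol{u}$, the velocity difference can only be estimated through $\|c\|_{L^4}$, which in $d=3$ carries the stronger weight $\|c\|_{H^1}^{3/2}$, so the single diffusion term $D\nr{\nabla c}$ must simultaneously dominate the gradient contributions coming from both the convection term and the velocity-to-concentration estimate. This is feasible only because the uniform $L^\infty$ bound on $c_1$ and the uniform $\boldsymbol{L}^4$ bound on $\boldsymbol{u}_2$ keep the prefactors finite, and because the exponential viscosity, though merely locally Lipschitz, becomes globally Lipschitz on the invariant range $[0,M_2]$.
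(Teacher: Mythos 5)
Your proposal is correct and follows essentially the same route as the paper's own proof: the maximum principle renders the exponential viscosity effectively globally Lipschitz and coercive, the Darcy difference tested with $\boldsymbol{u}$ yields $\nrm{\boldsymbol{u}}\lesssim \|\boldsymbol{u}_2\|_{L^4}\|c\|_{L^4}+\nrm{c}$, and the concentration energy identity combined with Gagliardo--Nirenberg, Young's inequality, and Gr\"onwall closes the loop. The only differences are cosmetic --- you split the convective term as $c_1\boldsymbol{u}+c\,\boldsymbol{u}_2$ rather than the paper's symmetric $c\,\boldsymbol{u}_1+c_2\boldsymbol{u}$, and you use the time-uniform bound $\boldsymbol{u}_2\in L^{\infty}(0,T;\boldsymbol{L}^4(\Omega))$ to get a constant Gr\"onwall coefficient where the paper carries a time-dependent integrable one --- and your justification of the chain rule for $t\mapsto\nr{c}$ is in fact more careful than the paper's.
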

\begin{proof}
    Suppose not, let $( c_{i}, \boldsymbol{u}_i),\;\;i=1,2$ %and $( c_{2}, \boldsymbol{u}_2)$ 
    be two weak solutions  for the  problem \eqref{model} satisfying 
    \eqref{weak1}-\eqref{weak2} for $i=1,2$.
    %\eqref{model}. Substituting these solutions %into the weak formulations \eqref{weak1}-
    %\eqref{weak2} and 
    On subtracting the resulting equations, it follows with $c=c_1-c_2$ and $\boldsymbol{u}= \boldsymbol{u}_1- \boldsymbol{u}_2$ that
    \begin{align}\label{ueq1}
        \left( \frac{ \mu(c_{1}(t))}{K} \boldsymbol{u}_1(t) - \frac{\mu(c_{2}(t))}{K} \boldsymbol{u}_2(t), \boldsymbol{v} \right) = \Big(\alpha\, c(t)\boldsymbol{g}, \boldsymbol{v}\Big), \quad \forall\, \boldsymbol{v} \in \boldsymbol{H}(\Omega),
    \end{align}
    and hence,
\begin{align}\label{ueq2}
     (k+1)  \innerproduct{\frac{\partial c}{\partial t}, \phi} + \br{c_{1}(t)\boldsymbol{u}_1(t)  - c_{2}(t)\boldsymbol{u}_2(t)}{\boldsymbol{\nabla}\phi} + D \br{\boldsymbol{\nabla}(c_{1}(t) - c_{2}(t)}{\boldsymbol{\nabla} \phi} + \br{\kappa\, c(t)}{\phi} = 0.
    \end{align}
%where $c = c_{1} - c_{2}$ and $\boldsymbol{u} = 
%\boldsymbol{u}_1 - \boldsymbol{u}_2$. 
Choose $\phi = c$ to obtain
    \begin{align*}
        \frac{(k+1)}{2}\frac{d }{dt} \nr{c(t)} + D \nr{\boldsymbol{\nabla}c(t)} + \br{\kappa\, c(t)}{c(t)} &= -\br{c(t)\boldsymbol{u}_1(t)}  {\boldsymbol{\nabla}c(t)} -\br{c_{2}(t)\boldsymbol{u}(t)}{ \boldsymbol{\nabla}c(t)}  ,\\
     & = - \frac{1}{2}\br{\boldsymbol{u}_1(t)}{\boldsymbol{\nabla}(c^2(t))} - \br{ c_{2}(t)\boldsymbol{u}(t)} {\boldsymbol{\nabla}c(t)}  .
    \end{align*}
With the divergence theorem, the first term on the right-hand side vanishes. Then, an application of the Hölder's inequality to the second term yields
\begin{align}\label{concentration estimate}
 \frac{(k+1)}{2}\frac{d }{dt} \nr{c(t)} + D \nr{\boldsymbol{\nabla}c(t)}+ \kappa_1 \nr{c(t)} \leq \|c_{2}(t) \|_{L^{\infty}(\Omega)}   \nrm{\boldsymbol{u}(t)}\nrm{\boldsymbol{\nabla}c(t)} .
\end{align}
 Rearranging the left-hand side term  of \eqref{ueq1}, it follows that  
 \begin{align*}
        \left( \frac{\mu(c_1)}{K} \big(\boldsymbol{u}_1 - \boldsymbol{u}_2\big), \boldsymbol{v} \right) = - \left(\frac{1}{K}\big( \mu(c_1)-\mu(c_2) \big)\boldsymbol{u}_{2}, \boldsymbol{v}  \right) + \alpha \left( c(t)\boldsymbol{g}, \boldsymbol{v}\right).
    \end{align*}
Now, setting $\boldsymbol{v} = \boldsymbol{u}(t)$ in the above equation, an application of the Holder's inequality shows
\begin{align*}
 \frac{\mu_1}{K}\nr{\boldsymbol{u}(t)} \leq \frac{M}{K}   \| c(t) \|_{L^{2(2+\delta)/\delta}} \|\boldsymbol{u}_2 \|_{L^{2+\delta}} \nrm{\boldsymbol{u}} + \alpha \|\boldsymbol{g}\|_{L^{\infty}(\Omega)}\nrm{c(t)} \nrm{\boldsymbol{u}(t)}.
\end{align*}
A use of the Gagliardo-Nirenberg inequality yields
    \begin{align}\label{velocity estimate}
      \nrm{\boldsymbol{u}(t)} \leq   \frac{M}{\mu_1}   \|\boldsymbol{u}_2(t) \|_{L^{2+\delta}} \| \|c(t)\|_{L^2}^{\delta/(2+\delta)} \|c(t)\|_{H^1}^{2/(2+\delta)}  + \frac{\alpha K}{\mu_1}  \|\boldsymbol{g}\|_{L^{\infty}(\Omega)} \nrm{c(t)},
    \end{align}
 Using the estimate from inequality \ref{velocity estimate} in inequality \ref{concentration estimate}, we obtain 
      \begin{align*}
       \frac{1}{2}\frac{d }{dt} \nr{c(t)} + D \nr{\boldsymbol{\nabla}c(t)}+ \kappa_1 \nr{c(t)} &\leq \frac{M}{\mu_1} \| \boldsymbol{u}_2(t) \|_{L^{2+\delta}}\|c_{2}(t) \|_{L^{\infty}(\Omega)} \| \|c(t)\|_{L^2}^{\delta/(2+\delta)} \|c(t)\|_{H^1}^{2/(2+\delta)}\|\boldsymbol{\nabla}c(t)\|_{L^2}   \\ & \qquad + \frac{\alpha K}{\mu_1} \|c_{2}(t) \|_{L^{\infty}(\Omega)}   \|\boldsymbol{g}\|_{L^{\infty}(\Omega)} \nrm{c(t)}\nrm{\boldsymbol{\nabla}c(t)}.
    \end{align*}
An application of Young's inequality shows
 \begin{align*}
        \frac{1}{2}\frac{d }{dt} \nr{c(t)} + D \nr{\boldsymbol{\nabla}c(t)} + \kappa_1 \nr{c(t)} &\leq \epsilon \nr{\boldsymbol{\nabla}c(t)} + \epsilon \|c(t)\|_{H^1}^2 + M_{\epsilon}  \|\boldsymbol{u}_2(t) \|_{L^{2+\delta}}^{2(2+\delta)/ \delta} \|c_{2}(t) \|_{L^{\infty}(\Omega)}^{2(2+\delta)/ \delta} \|c(t)\|_{L^2}^{2}  \\ & \qquad+ \epsilon \nr{\boldsymbol{\nabla}c(t)} + K_{\epsilon} \|c_{2}(t) \|_{L^{\infty}(\Omega)}^2  \|\boldsymbol{g}\|_{L^{\infty}(\Omega)}^2 \nr{c(t)}.
    \end{align*}
Define $M_{\epsilon} = \max\left\{M_{\epsilon} , K_{\epsilon} \right\}$ and set $\phi(t) = \|\boldsymbol{u}_2(t) \|_{L^{2+\delta}}^{2(2+\delta)/ \delta} \|c_{2}(t) \|_{L^{\infty}(\Omega)}^{2(2+\delta)/ \delta} + \|c_{2}(t) \|_{L^{\infty}(\Omega)}^2  \|\boldsymbol{g}\|_{L^{\infty}(\Omega)}^2 $. Then we arrive at
\begin{align*}
      \frac{1}{2}  \frac{d }{dt} \nr{c(t)} + \left(D - 3 \epsilon\right) \nr{\boldsymbol{\nabla}c(t)} + (\kappa_1 - \epsilon)\nr{c(t)} \leq M_{\epsilon}\, \phi(t)  \nr{c(t)}.
    \end{align*}
Choose $\epsilon > 0$ sufficiently small with $\epsilon < \min\left\{\frac{D}{3}, \kappa_1\right\}$. Then, an application of the Grönwall's inequality yields
    \begin{align}
        \nrm{c_{1}(t) - c_{2}(t)} \leq  \nrm{c_{1}(0) - c_{2}(0)} \,e^{M\int_{0}^{\infty} \phi(s)ds}, ~ \text { for } ~ t \in (0,\infty).
    \end{align}
  Now if  $\phi \in L^1(0,\infty)$ then from preceding inequality, it follows that $c_{1}(t,\boldsymbol{x}) = c_{2}(t,\boldsymbol{x})$ for almost every $(t,\boldsymbol{x}) \in (0,\infty) \times \Omega$. Using the equality $c_{1} = c_{2}$ in inequality \ref{velocity estimate}, we further deduce that $\boldsymbol{u}_1(t,\boldsymbol{x}) = \boldsymbol{u}_2(t,\boldsymbol{x})$ for almost every $(t,\boldsymbol{x}) \in (0,\infty) \times \Omega$. {To show that $\phi \in L^1(0,\infty)$, we utilize the velocity estimate from \eqref{eq:velocity estimate uniqueness}. Specifically, by applying the bound $\|\boldsymbol{u}(t)\|_{L^{2+\delta}(\Omega)} \leq M \|c(t)\|_{L^{\infty}(\Omega)}$, we estimate $\phi(t)$ as follows:
  $$ 0 \leq \phi (t) \leq M  \|c_{2}(t) \|_{L^{\infty}(\Omega)}^{4(2+\delta)/ \delta} + \|c_{2}(t) \|_{L^{\infty}(\Omega)}^2  \|\boldsymbol{g}\|_{L^{\infty}(\Omega)}^2.$$
  Now using the $L^{\infty}$ decay estimate on the concentration, and integrating from $t=0$ to $\infty$, we obtain 
  \begin{align*}
   \|\phi (t)\|_{L^1(0,\infty)} &\leq M   \|c_0\|_{L^{\infty}(\Omega)}^{\frac{4(2+\delta)}{\delta}} \int_{0}^{\infty} e^{-\left(\frac{4(2+\delta)\kappa_1}{\delta(k+1)}  \right) t} +  \|\boldsymbol{g}\|_{\boldsymbol{L}^{\infty}(\Omega)} \|c_0\|_{L^{\infty}(\Omega)}^{2} \int_{0}^{\infty} e^{-\left( \frac{2\kappa_1}{k+1}\right) t} \\ & =     \frac{M\delta(k+1)}{4\kappa_1(2+\delta)}\|c_0\|_{L^{\infty}(\Omega)}^{\frac{4(2+\delta)}{\delta}}  +   \frac{(k+1)}{2 \kappa_1} \|\boldsymbol{g}\|_{\boldsymbol{L}^{\infty}(\Omega)} \|c_0\|_{L^{\infty}(\Omega)}^{2} < \infty .   
  \end{align*}
  }
This completes the rest of the proof.
\end{proof}
%%%%%%%%%%%%%%%%%%%%%% Figure 1 %%%%%%%%%%%%%%%%%%%%%%%%%%%%%%%%%%
%%%%%%%%%%%%%%%%%%%%%%%%%%%%%%%%%%%%%%%%%%%%%%%%%%%%%%%%%%%%%%
\begin{figure}[htbp]
    \centering
    \resizebox{!}{0.45\textheight}{%
    \begin{minipage}{\textwidth}

    %-----------------------------------------
    % Row 1 (alpha = 1)
   \foreach \j in {1,2,3,4}{
        \begin{subfigure}{0.24\textwidth}
        \begin{tikzpicture}
            \node[anchor=south west, inner sep=0] (img) {\includegraphics[width=\linewidth]{Images/Figure_1_\j.png}};
            \begin{scope}[x={(img.south east)},y={(img.north west)}]
                \node[white,font=\bfseries\large] at (0.5,0.9) {$\alpha=1$, $t=\the\numexpr150*\j$};
            \end{scope}
        \end{tikzpicture}
        \end{subfigure}
    }

    %-----------------------------------------
    % Row 2 (alpha = 2)
    \foreach \j in {5,6,7,8}{
        \begin{subfigure}{0.24\textwidth}
        \begin{tikzpicture}
            \node[anchor=south west, inner sep=0] (img) {\includegraphics[width=\linewidth]{Images/Figure_1_\j.png}};
            \begin{scope}[x={(img.south east)},y={(img.north west)}]
                \node[white,font=\bfseries\large] at (0.5,0.9) {$\alpha=2$, $t=\the\numexpr150*(\j - 4)$};
            \end{scope}
        \end{tikzpicture}
        \end{subfigure}
    }
    
    %-----------------------------------------
    % Row 3 (alpha = 3)
    \foreach \j in {9,10,11,12}{
        \begin{subfigure}{0.24\textwidth}
        \begin{tikzpicture}
            \node[anchor=south west, inner sep=0] (img) {\includegraphics[width=\linewidth]{Images/Figure_1_\j.png}};
            \begin{scope}[x={(img.south east)},y={(img.north west)}]
                \node[white,font=\bfseries\large] at (0.5,0.9) {$\alpha=3$, $t=\the\numexpr150*(\j - 8)$};
            \end{scope}
        \end{tikzpicture}
        \end{subfigure}
    }

    %-----------------------------------------
    % Row 4 (alpha = 4)
    \foreach \j in {13,14,15,16}{
        \begin{subfigure}{0.24\textwidth}
        \begin{tikzpicture}
            \node[anchor=south west, inner sep=0] (img) {\includegraphics[width=\linewidth]{Images/Figure_1_\j.png}};
            \begin{scope}[x={(img.south east)},y={(img.north west)}]
                \node[white,font=\bfseries\large] at (0.5,0.9) {$\alpha=4$, $t=\the\numexpr150*(\j-12)$};
            \end{scope}
        \end{tikzpicture}
        \end{subfigure}
    }

    \end{minipage}
    }

    \includegraphics[scale=0.4]{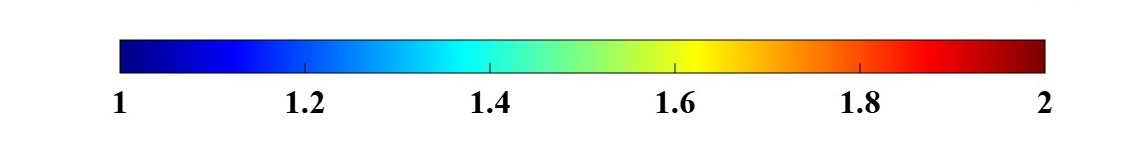}
     \caption{Time evolution of concentration profiles at $t = 150, 300, 450, 600$ (left to right) for $\alpha = 1, 2, 3, 4$ (top to bottom).}
    \label{fig: alpha_effect}
\end{figure}
%%%%%%%%%%%%%%%%%%%%%%%%%%%%%%%%%%%%%%%%%%%%%%%%%%%%%%%%%%%%%%%%%%%%

\section{Numerical Results and Discussion}\label{Numerical Results}
This section presents a detailed numerical study of the initial–boundary value problem \eqref{model}. 
%for which well-posedness has been proved under appropriate assumptions.

\noindent
Our numerical study is divided into two parts. First, in Subsections \ref{kinetic energy} through \ref{Instability and Mixing Behavior}, we consider the non-reactive case by setting $\kappa = 0$. This allows us to isolate the effects of the viscosity contrast ($R$), density contrast ($\alpha$), and adsorption coefficient ($k$) on instability and mixing behavior. In Subsection \ref{Numerical Asymptotic Behavior}, we introduce the reaction terms and specifically discuss how the reaction coefficient ($\kappa$) and the adsorption coefficient($k$) modulate the exponential decay rate. To align with our analytical framework, the concentration-dependent viscosity is specifically chosen as an exponential function of the form  $\mu(c) = e^{R\,c}$, where $R$ is the viscosity contrast coefficient. The purpose of this investigation is to validate the theoretical findings and to illustrate the qualitative behavior of the solutions. The problem is studied in the space-time domain \( (0, T) \times \Omega \), with \( \Omega = \{ (x, y) \in \mathbb{R}^2 \mid x \in (0, 100),\ y \in (0, 200) \} \) representing a rectangular region, and \( T > 0 \), the duration of the simulation. We reformulate the system by applying the divergence operator to Darcy’s law \eqref{model2}, and utilizing the mass conservation equation \eqref{model1}, thus eliminating the velocity variable from the system. This reformulation enables us to solve for pressure and concentration alone, while the velocity field can be subsequently recovered using Darcy’s law \eqref{model2}. The weak formulation corresponding to this system is given by:
\begin{subequations}\label{weak formulation}
\begin{equation}
 0 = \int_{\Omega} e^{-R\,c}\left(\boldsymbol{\nabla} p + (1+\alpha \,c)\boldsymbol{g}  \right) \cdot \boldsymbol{\nabla} q,    
\end{equation}
\begin{equation}
  0 =  \int_{\Omega} - {(k+1)}\frac{\partial c}{\partial t} \phi + c\, e^{-R\,c}\left(\boldsymbol{\nabla} p + (1 + \alpha\, c) \boldsymbol{g}  \right) \cdot \boldsymbol{\nabla} \phi - D \boldsymbol{\nabla} c \cdot \boldsymbol{\nabla} \phi - \kappa\,c \phi.
\end{equation}
\end{subequations}
Here, $q$ and $\phi$ are test functions corresponding to the pressure and concentration, respectively. No-flux boundary conditions are imposed. The initial distribution of the concentration is given by:
\begin{align}
    c(0, x, y) =  \begin{cases} 
2, & y \geq 100, \\ 
1, & y < 100 .
\end{cases}
\end{align}
The weak formulation \eqref{weak formulation} is solved using the finite element method in \textsc{COMSOL Multiphysics} \textsuperscript{\textregistered} \cite{COMSOL2024}, with $D = 0.005$ and for various values of the density contrast coefficient $\alpha$, adsorption coefficient $k$, and viscosity contrast $R$. Free quadrilateral elements were used for the spatial discretizations. A random perturbation of magnitude $10^{-3}$ was applied at the interface in the initial concentration field to trigger the instability. The same random seed was used for all simulations to maintain consistency in the perturbation pattern. {Detailed specifications regarding the numerical implementation, including time discretization, nonlinear solvers, and coupling strategies, are provided in Appendix A. Furthermore, the complete COMSOL model files and associated datasets are available at \cite{kundu_2026_18891897}.}

\subsection{ Total kinetic energy }\label{kinetic energy} 
 To quantify the instability, we compute the total kinetic energy, which is  defined by
 $$ \mathcal{E}(t) = \int_{\Omega}  (u^2 + v^2)~  dx dy,$$
where \( u \) and \( v \) are the velocity components of $\boldsymbol{u}.$ 

To evaluate the mixing, we used the degree of mixing. These quantities are computed for various parameter values of \( \alpha \), \( k \), and \( R \), to analyze the effects of viscosity contrast, adsorption, and density contrast on both the instability and the mixing dynamics.

Now, choose $\boldsymbol{v} = \boldsymbol{u}(t)$ in \eqref{weak2} and then appy the Holder's inequality to arrive at
\begin{align*}
    \nr{\boldsymbol{u}(t)} \leq e^{-2R}\nr{1+\alpha\, c(t)}. 
\end{align*}
A use of the triangle inequality with $\nrm{c(t)} \leq \nrm{c_{0}}$ yields
\begin{align}\label{energy bounds}
\mathcal{E}(t) =   \nr{\boldsymbol{u}(t)} \leq e^{-2R} \left( \alpha^2 + \frac{6}{5}\alpha + \frac{2}{5}\right) \nr{c_0}.
\end{align}
Therefore, we observe that an increase in
$R$ leads to a decrease in energy, indicating a flow stabilization. In contrast, as $\alpha$ increases, the energy also increases, suggesting an improvement in flow dynamics. This implies that the instability intensifies with increasing $\alpha$ and decreases as $R$ increases.

\subsection{Degree of Mixing }
The degree of mixing is a quantitative metric that characterizes the extent of homogeneity of a scalar field, such as concentration, temperature, or density, within a specified domain. Now, the degree of mixing $\chi(t)$ is defined as
\begin{align} \label{mixing index}
\chi(t) = 1 - \frac{\sigma^{2}_{c}(t)}{\sigma^{2}_{c}(0)},
\end{align}
where $\sigma^{2}_{c}(t)$ represents the spatial variance at time $t$, given by
\[\sigma^{2}_{c}(t) = \frac{1}{|\Omega|} \int_{\Omega} (c - \overline{c})^2 \,dx \,dy.
\]
Here, $\overline{c}$ denotes the average concentration and $|\Omega|$ is the area of the domain $\Omega$.

Setting \(\phi = 1\) in \eqref{weak2}, we demonstrate that the total concentration remains conserved, meaning it retains its initial value.  Specifically, it remains constant over time:
\[
\overline{c}(t) = \frac{1}{|\Omega|} \int_{\Omega} c(t,x,y) \,dx \,dy = \frac{1}{|\Omega|} \int_{\Omega} c(0,x,y) \,dx \,dy.
\]  
Furthermore, by choosing the test function \( \phi = c(t) - \overline{c} \) in convection diffusion equation , we obtain  
\begin{align*}
    \frac{(1+k)}{2} \frac{d}{dt} \nr{c(t) -\overline{c}} + \int_{\Omega} \boldsymbol{u}(t)\cdot \boldsymbol{\nabla} (c(t) - \overline{c}) (c(t) - \overline{c}) dx dy + D \nr{\boldsymbol{\nabla}(c(t) - \overline{c})}=0
\end{align*}
Applying integration by parts and using the no-flux boundary conditions, the second term in the above equation vanishes. Furthermore, by applying Poincaré's inequality, we obtain  
\[
 \| c - \overline{c} \|_{L^2} \leq M \| \boldsymbol{\nabla} (c - \overline{c}) \|_{L^2},
\]  
where \( M \) is the Poincaré constant. For a rectangular domain $\Omega = (0, L_x) \times (0, L_y)$, the optimal Poincaré constant is:
\(
M = \frac{1}{\pi} \cdot \max\{L_x, L_y\}.
\)
 With these observations, the above equation simplifies to  
\begin{align*}
 \frac{d}{dt} \nr{c(t)-\overline{c}} + \frac{2 D }{M(1+k)} \nr{c(t)-\overline{c}} \leq 0,  
\end{align*}
and hence,
\begin{align*}
 \nr{c(t)-\overline{c}} \leq  \nr{c_{0}-\overline{c}} e^{-\frac{2 D }{M(1+k)} t}.
\end{align*}
From the definition of variance, we can write 
\begin{align} \label{variance bound}
    0 \leq \frac{\sigma^{2}_{c}(t)}{\sigma^{2}_{c}(0)} \leq  e^{-\frac{2 D }{M(1+k)} t}.
\end{align}
Following Jha et al. \cite{jha2011fluid}, we quantify the mixing efficiency using the normalized variance of the concentration field. From the estimate in 
\eqref{variance bound}, we have $\sigma_c^2(t) \le \sigma_c^2(0)$ for all $t\in[0, T]$, we set the maximum variance as $\sigma^2_{\max} = \sigma_c^2(0)$.
From \eqref{variance bound}, we observe that the variance is maximum at \( t = 0 \) and decreases over time, with \( \sigma^{2}_{c}(t) \to 0 \) as \( t \to \infty \). This relationship can be reformulated in terms of the degree of mixing.
\begin{align} \label{mixing index bound}
    1 - e^{-\frac{2 D }{M(1+k)} t} \leq \chi(t) \leq 1
\end{align}
From \eqref{mixing index bound}, we note  that \( \chi(t) \in [0,1] \) for all times, with \( \chi(0) = 0 \) and \( \chi(t) \to 1 \) as \( t \to \infty \). The degree of mixing quantifies the degree of homogeneity: values close to zero indicate minimal mixing, while a higher value means increased mixing, reaching complete mixing when \( \chi(t) = 1 \). Furthermore, we observe that the mixing process slows down as the parameter \( k \) increases.

%%%% Mesh Convergence Study %%%%%%%%%%%%%%%%%%%%%%%%
%%%%%%%%%%%%%%%%%%%%%%%%%%%%%%%%%%%%%%%%%%%%%%%%%%%%%%%%%
\subsection{Mesh Convergence Study}
\begin{table}[h!]
\centering
\begin{tabular}{|c|c|c|c|c|c|}
\hline
\textbf{Mesh size} \( h \) & \textbf{DOF} 
& \multicolumn{2}{c|}{\textbf{Energy Error}} 
& \multicolumn{2}{c|}{\textbf{Variance Error}} \\
\cline{3-6}
& 
& \( \|E_h - E_{\text{ref}}\|_{L^\infty} \) 
& \( \dfrac{\|E_h - E_{\text{ref}}\|_{L^2}}{\|E_{\text{ref}}\|_{L^2}} \) 
& \( \left\| \dfrac{\text{Var}_h - \text{Var}_{\text{ref}}}{\text{Var}_{\text{ref}}} \right\|_{L^\infty} \) 
& \( \dfrac{\|\text{Var}_h - \text{Var}_{\text{ref}}\|_{L^2}}{\|\text{Var}_{\text{ref}}\|_{L^2}} \) \\
\hline
0.640000  & 395010   & 0.1837 & 0.3301 & 0.0091 & 0.0041 \\
0.327680  & 1494506  & 0.0646 & 0.1177 & 0.0026 & 0.0013 \\
0.262144  & 2336310  & 0.0328 & 0.0511 & 0.0007 & 0.0002 \\
\hline
\end{tabular}
\caption{Error in energy and variance for different mesh sizes, in \( L^2 \) and \( L^\infty \) norms}
\label{tab:error-table}
\end{table}

A well-chosen mesh can provide higher accuracy while reducing computational cost. To ensure the independence of the numerical solution mesh, we performed a refinement study by varying the maximum element size \( h \) from the set \{0.64, 0.327680, 0.262144, 0.16777\}. For this test, the model parameters are fixed as \( R = 2 \), \( \alpha = 2 \), \( k = 2 \), and the diffusion coefficient \( D = 0.005 \). The finest mesh, corresponding to \( h = 0.16777 \), is considered the reference solution. To quantify mesh convergence, we compute the relative error in two physical quantities of interest: the total kinetic energy and the degree of mixing. Total kinetic energy serves as a quantitative indicator of the growth of the fingering instability \cite{lyubimova2019rayleigh}, while the degree of mixing measures the extent of concentration homogenization in the domain. These metrics are chosen because of their relevance in characterizing the evolution of instability and mixing in porous media flows. Here, \( E_h \) and \( \text{Var}_h \) denote the computed total kinetic energy and variance, respectively, on a mesh with maximum element size \( h \), while \( E_{\text{ref}} \) and \( \text{Var}_{\text{ref}} \) denote the corresponding reference values computed on the finest mesh with \( h = 0.16777 \). Table~\ref{tab:error-table} presents the errors in total kinetic energy and variance for various mesh sizes \( h \), measured in the \( L^2 \) and \( L^{\infty} \) norms. As the total kinetic energy is nearly zero at early times, absolute error is used instead of relative error to ensure meaningful comparisons. As seen in Table~\ref{tab:error-table}, the errors decrease consistently with decreasing \( h \), demonstrating numerical convergence. The smallest errors occur at \( h = 0.262144 \); thus, this mesh size is used for all subsequent simulations.
Our theoretical investigation has determined both the upper and lower bounds for the concentration and demonstrated that the system complies with the principles of mass conservation for the concentration. Numerical simulations verify that the applied numerical method meets these theoretical bounds and ensures total mass conservation with high precision, thus affirming the accuracy of the numerical method employed.
%%%%%%%%%%%%%%%%%%%%%%%%%%%%%%%%%%%%%%%%%%%%%%%%%%%%%%%%%5
%%%%%%%%%%%%%%%%%%%%%%%%%%%%%%%%%%%%%%%%%%%%%%%%%%%%%%%%%%%%%%%%%%%%%%
%%%%%%%%%%%%%%%%%%%%%%%%%%%%%%%%%%%%%%%%%%%%%%%%%%%%%%%%%%%%%%%%%%%
\begin{figure}[h!]
    \centering
    \begin{multicols}{2}
        \centering
          \includegraphics[width=1\linewidth, trim={0 210 10 210}, clip]{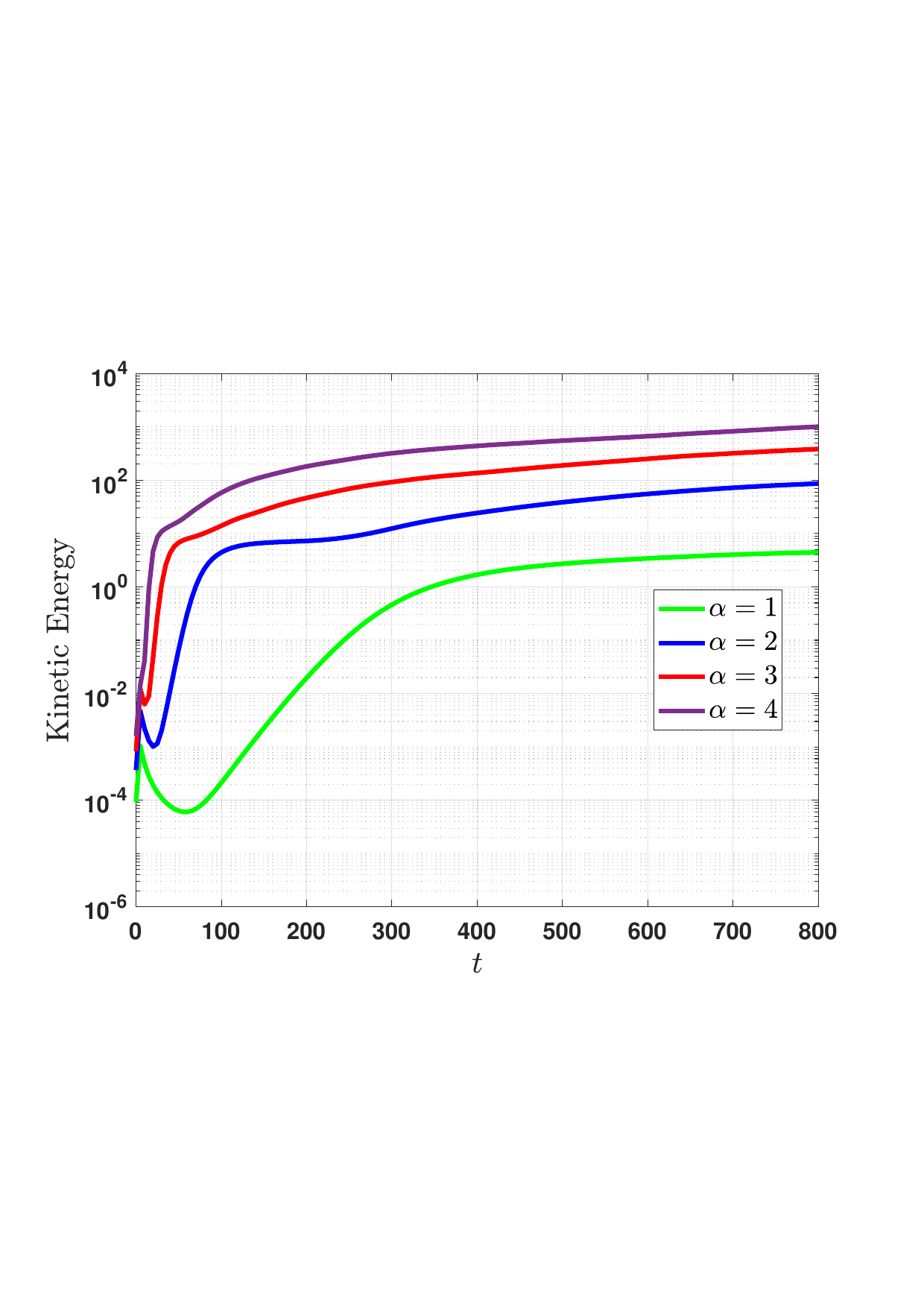}
        \caption{Energy evolution over time for different values of $\alpha$, with $k = 1$ and $R = 1$.}
        \label{fig: alpha effect on energy}
        \includegraphics[width=1\linewidth, trim={0 210 10 210},clip]{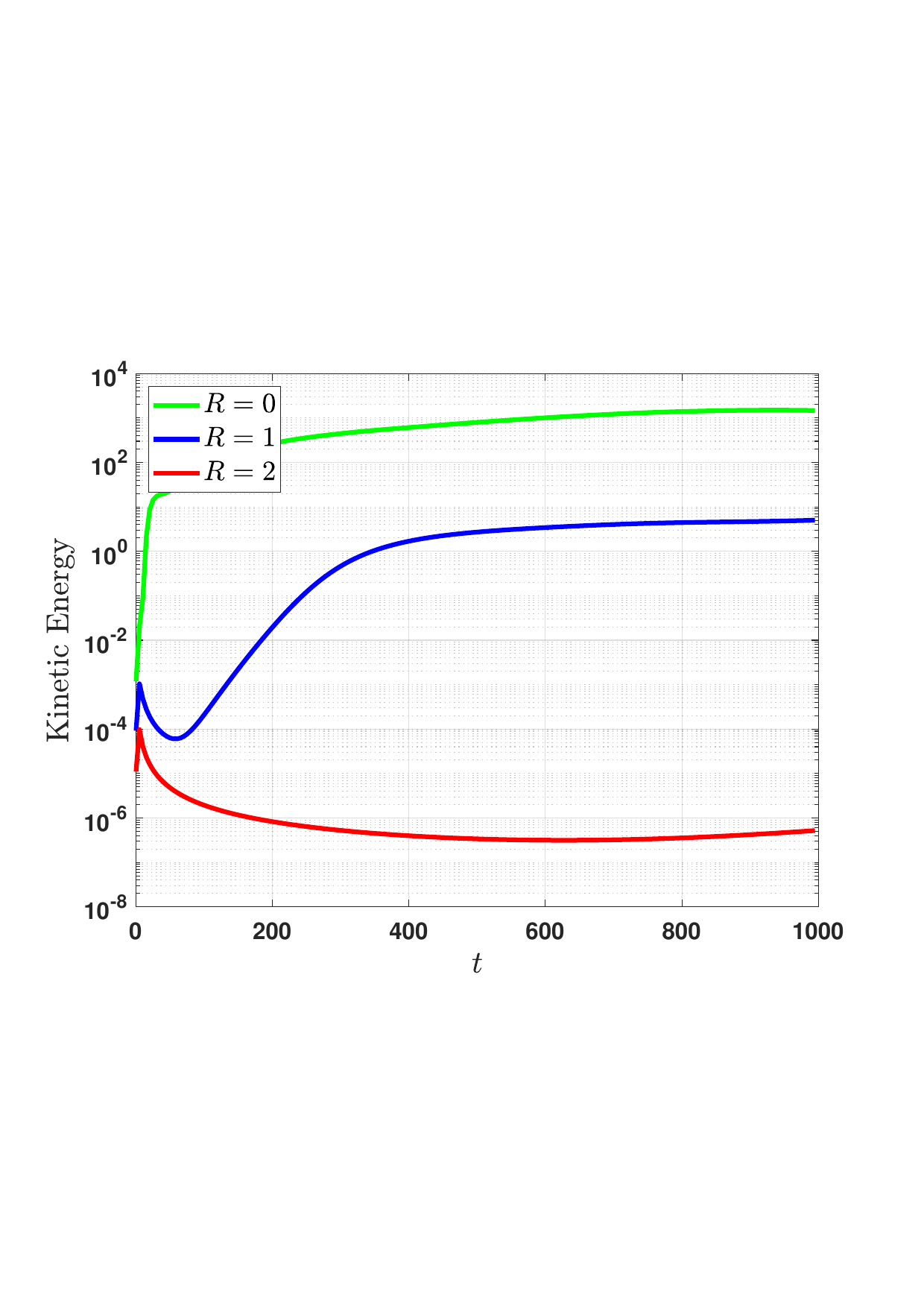}
        \caption{Energy evolution over time for different values of $R$, with $\alpha = 1$ and $k = 1$.}
        \label{fig: R effect on energy}
    \end{multicols}
\end{figure}

\begin{figure}[h!]
    \centering
    \begin{multicols}{2}
        \centering
        \includegraphics[width=1\linewidth, trim={0 200 10 200},clip]{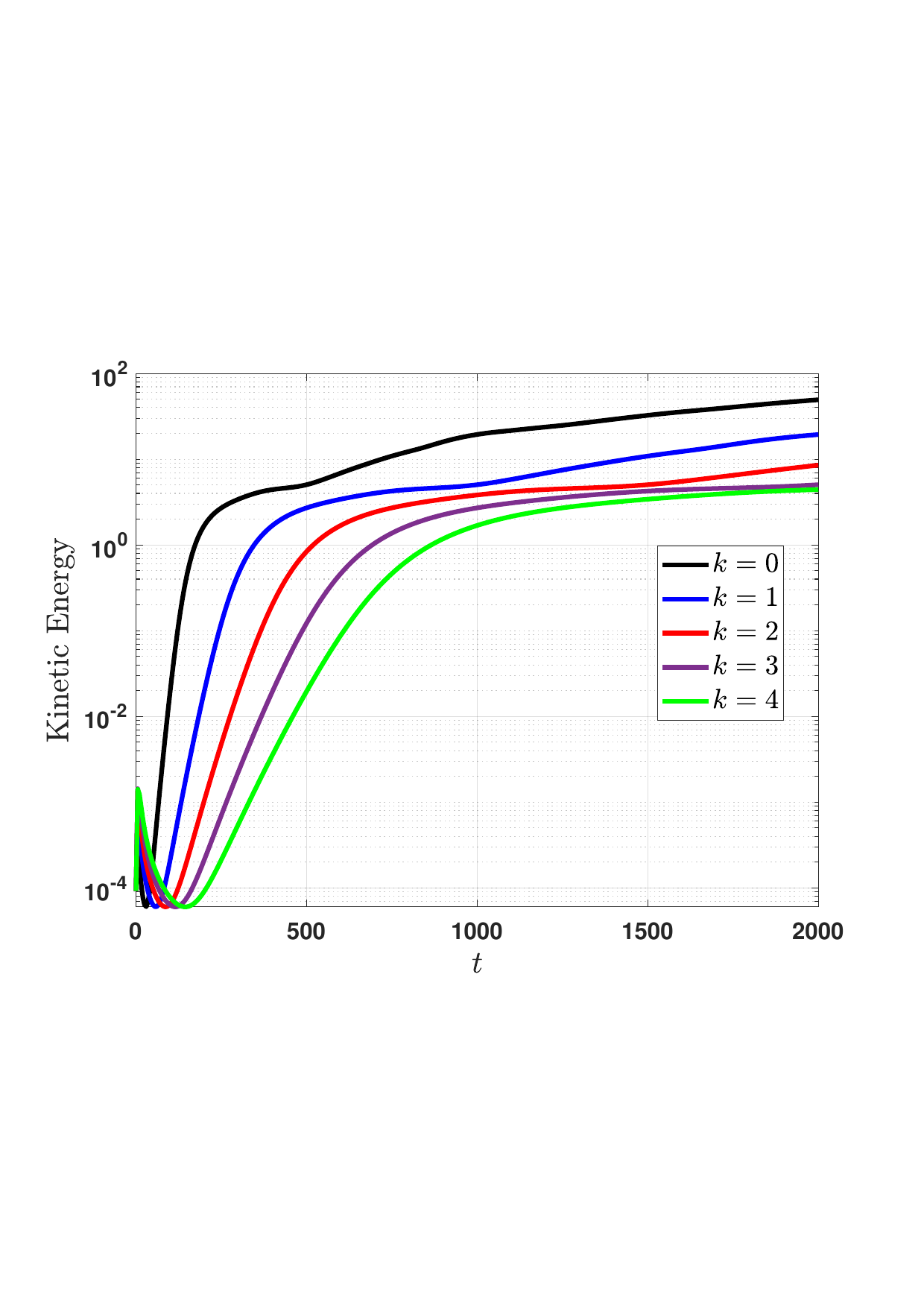}
        \caption{Energy evolution over time for different values of $k$, with $\alpha = 1$ and $R = 1$.}
        \label{fig:k effect on energy}
        \includegraphics[width=1\linewidth, trim={0 200 10 200},clip]{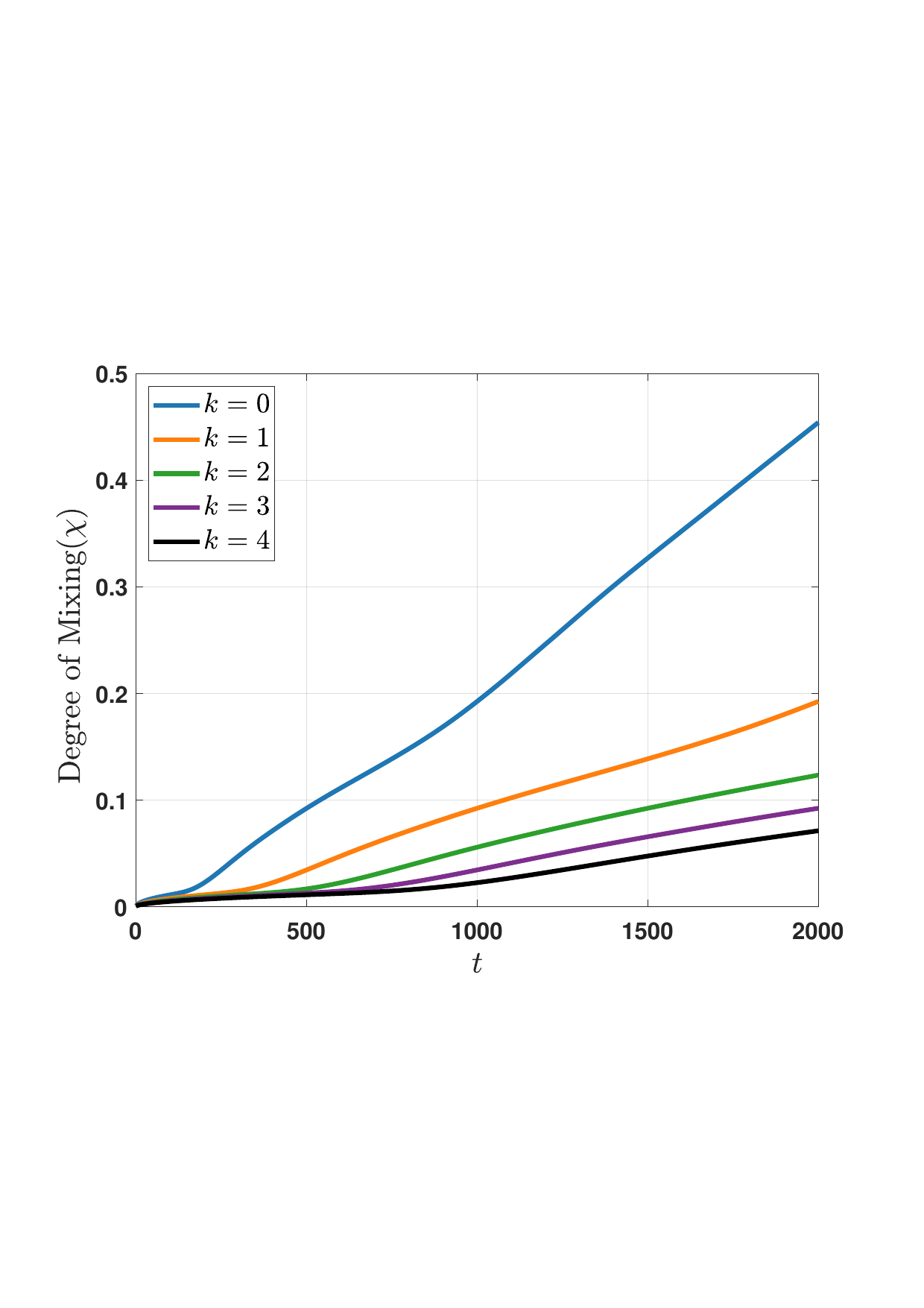}
        \caption{Variation of the degree of mixing over time for different values of $k$, with $\alpha = 1$ and $R = 1$.}
        \label{fig:k effect on mixing}
    \end{multicols}
\end{figure}
%%%%%%%%%%%%%%%%%%%%%%%%%%%%%%%%%%%%%%%%%%%%%%%%%%%%%%%%%%%%%%%%%%%%

\subsection{ Instability and Mixing Behavior under Adsorption, Viscosity, and Density Variations }\label{Instability and Mixing Behavior}
Spatio-temporal evolution of the concentration profile for $k = 1$ and $R = 1$ at $t = 150, 300, 450,$ and $600$ (columns), for $\alpha = 1, 2, 3,$ and $4$ (rows), is shown in Figure \ref{fig: alpha_effect}. From inequality~\eqref{energy bounds}, it is evident that the total kinetic energy increases with increasing $\alpha$, indicating enhanced flow activity. This trend is also clearly illustrated in Figure~\ref{fig: alpha_effect}. Let $\mathcal{E}_\alpha$ denote the total kinetic energy corresponding to the parameter $\alpha$. From inequality~\eqref{energy bounds}, we find $\mathcal{E}_{\alpha = 2} \geq 2.61\,\mathcal{E}_{\alpha = 1}$, $\mathcal{E}_{\alpha = 3} \geq 1.91\,\mathcal{E}_{\alpha = 2}$, and $\mathcal{E}_{\alpha = 4} \geq 1.63\,\mathcal{E}_{\alpha = 3}$. This shows that the energy increase from $\alpha = 1$ to $\alpha = 2$ is the largest, with progressively smaller relative increases for $\alpha = 2 \to 3$ and $\alpha = 3 \to 4$. A similar pattern is observed in Figure~\ref{fig: alpha effect on energy}.

As the adsorption parameter $k$ increases, a noticeable suppression of the instability is observed, suggesting that stronger adsorption leads to a more stabilized flow. To quantify this effect, we plot the evolution of total kinetic energy over time in Figure \ref{fig:k effect on energy} for different values of $k = 0, 1, 2, 3, 4$. The results are consistent: energy decreases as the value of $k$ increases, further confirming the stabilizing influence of adsorption. In addition to analyzing instability, we plot the degree of mixing for different values of $k = 0, 1, 2, 3, 4$ to track the mixing process. As evident from equation \eqref{mixing index bound}, the degree of mixing satisfies the approximate bounds $\chi_{k=0} \gtrsim 2 \chi_{k=1}$, $\chi_{k=1} \gtrsim 1.5 \chi_{k=2}$, $\chi_{k=2} \gtrsim 1.33 \chi_{k=3}$ and $\chi_{k=3} \gtrsim 1.24 \chi_{k=4}$, suggesting that the influence of the parameter $k$ on mixing becomes progressively weaker as $k$ increases. This trend is further confirmed by the corresponding plot \ref{fig:k effect on mixing}, which clearly illustrates the decreasing sensitivity of the degree of mixing with increasing $k$.

As indicated by inequality \eqref{energy bounds}, a significant energy reduction is observed as the value of $R$ increases. This trend is corroborated by Figure~\ref{fig: R effect on energy}, which illustrates a temporal decline in energy at elevated $R$ values. For example, the scenario corresponding to $R = 0$ displays specific energy levels, whereas for $R = 1$, the energy remains minimal, and for $R=2$, the flow is comparatively stable. 

\subsection{Numerical Study of Exponential Decay in the Reactive Case}\label{Numerical Asymptotic Behavior}
In this subsection, we computationally investigate the impact of the adsorption coefficient $k$ and the reaction constant $\kappa$ on the exponential decay of the concentration, fixing $\alpha=1$ and $R=1$. Setting $\kappa_1 = \kappa_2 = \kappa$ in Theorem \ref{thm:asymptotic}, we obtain the theoretical exponential decay rate $\lambda = \frac{2\, \kappa}{k+1}$ for the concentration in the $L^p$ norm, where $p \in [1, \infty].$ We compare this theoretical rate $\lambda$ with the numerical decay rate $\tilde{\lambda}$ obtained from simulations for the $L^2$ norm ($p=2$).
%%%%%%%%%%%%%%%%%%%%%%%%%%%%%%%%%%%%%%%%%%%%%%%%%%%%%%%%%%%%
\begin{figure}[h!]
    \centering
    \begin{multicols}{2}
        \centering
          \includegraphics[width=1\linewidth, trim={0 210 10 210}, clip]{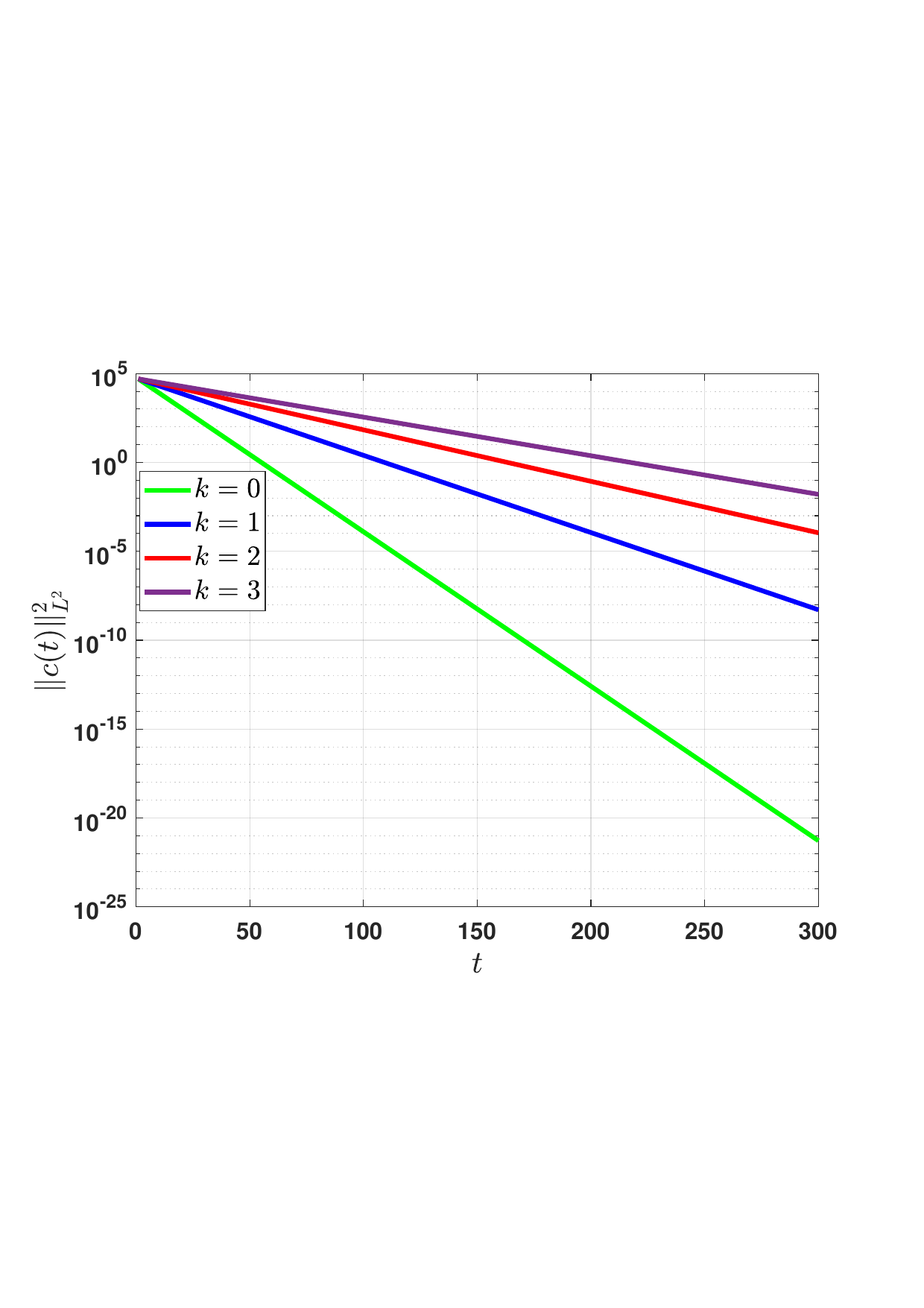}
        \caption{Log-linear plot of $\|c(t)\|_{L^2}^2$ decay versus time for $k=0, 1, 2, 3$, at a fixed $\kappa = 0.1$.}
        \label{fig: C_L_2 norm vs k}
        \includegraphics[width=1\linewidth, trim={0 210 10 210},clip]{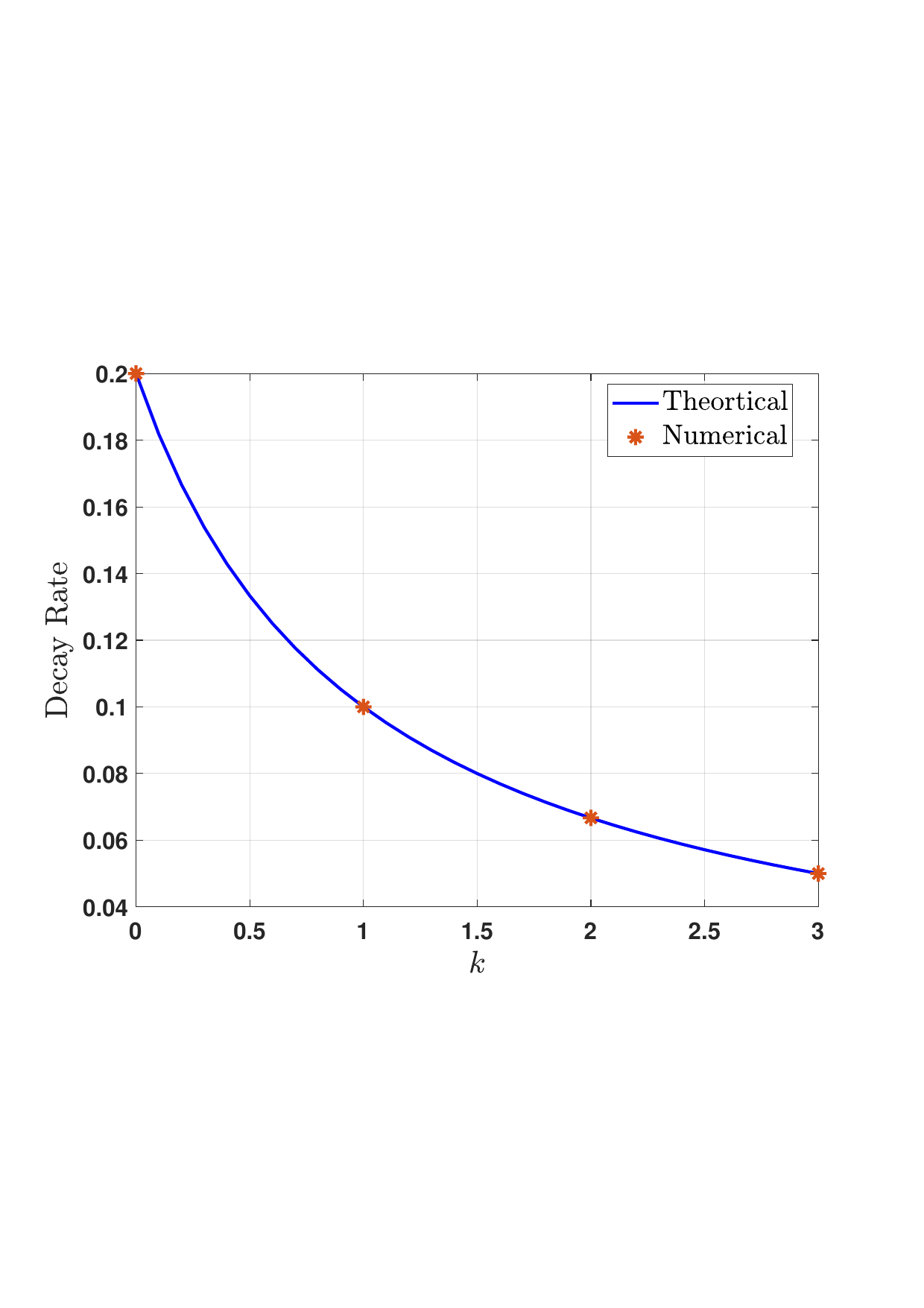}
        \caption{Theoretical decay rate, $\lambda = 0.2 / (k+1)$, compared against the numerically computed decay rates (dots) for $k=0, 1, 2, 3$.}
        \label{fig: Numerical vs theortical decay rate for k=0,1,2,3}
    \end{multicols}
\end{figure}

\begin{figure}[h!]
    \centering
    \begin{multicols}{2}
        \centering
        \includegraphics[width=1\linewidth, trim={0 200 10 200},clip]{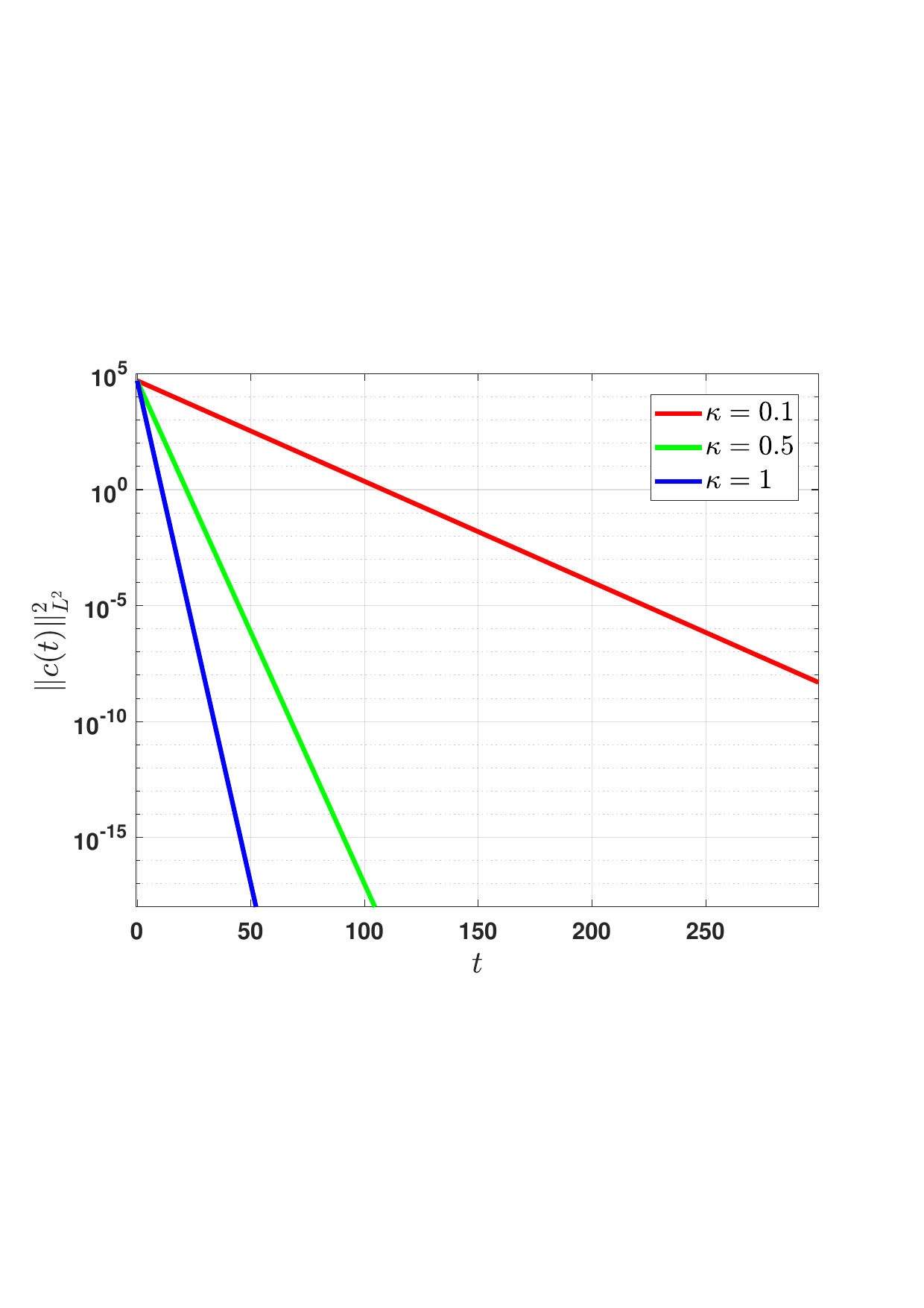}
        \caption{Log-linear plot of $\|c(t)\|_{L^2}^2$ decay versus time for $\kappa=0.1, 0.5, 1$, at a fixed $k = 1$.}
        \label{fig: C_L_2 norm vs kappa}
        \includegraphics[width=1\linewidth, trim={0 200 10 200},clip]{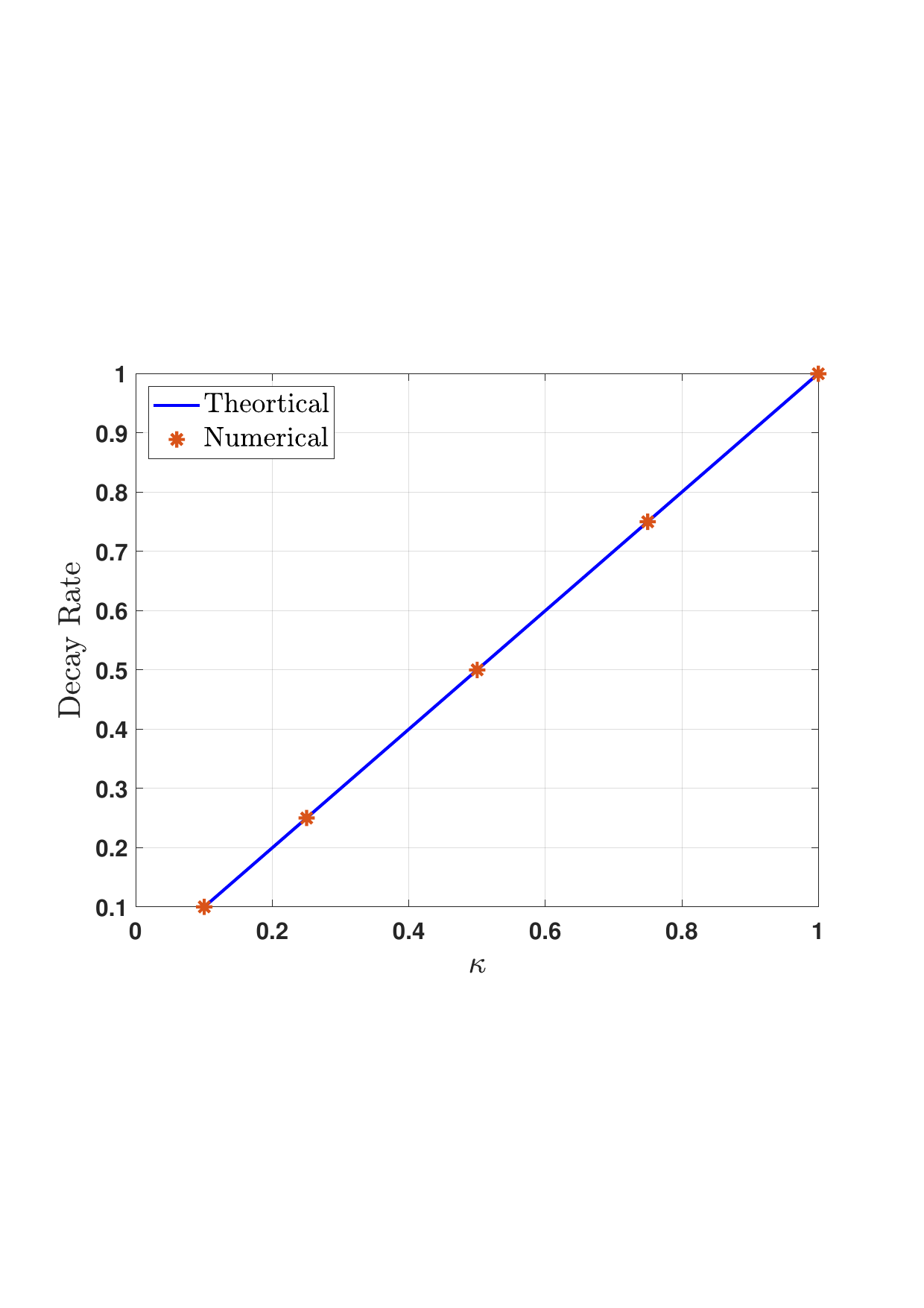}
        \caption{Theoretical decay rate, $\lambda =  \kappa $, compared against the numerically computed decay rates (dots) for $\kappa=0.1, 0.25, 0.5, 0.75, 1$.}
        \label{fig: Numerical vs theortical decay rate for kappa}
    \end{multicols}
\end{figure}
%%%%%%%%%%%%%%%%%%%%%%%%%%%%%%%%%%%%%%%%%%%%%%%%%%%

Effect of the Adsorption Coefficient ($k$): First, we fix $\kappa = 0.1$ and vary $k \in \{0, 1, 2, 3\}$. Figure \ref{fig: C_L_2 norm vs k} plots $\nr{c(t)}$ versus time using a logarithmic scale for the y-axis. We clearly observe exponential decay, where the slope of the lines represents the numerical decay rate. This figure matches the theoretical qualitative behavior: as $k$ increases, the slope of the lines decreases, indicating a slower decay rate. We then compute the numerical decay rate $\tilde{\lambda}$ using an exponential MATLAB fit. In Figure \ref{fig: Numerical vs theortical decay rate for k=0,1,2,3}, we plot both the numerical ($\tilde{\lambda}$) and theoretical ($\lambda$) decay rates as a function of $k$. The results show an excellent match.

Effect of the Reaction Constant ($\kappa$): Next, we fix $k = 1$, which simplifies the theoretical decay rate to $\lambda = \kappa$. We then test $\kappa \in \{0.1, 0.5, 1\}$. As predicted, a larger $\kappa$ leads to a faster decay rate. This qualitative behavior is observed in Figure \ref{fig: C_L_2 norm vs kappa}, which plots $\nr{c(t)}$ vs. time for these $\kappa$ values. As with the $k$-study, a comparison of the theoretical and numerical decay rates as a function of $\kappa$ is shown in Figure \ref{fig: Numerical vs theortical decay rate for kappa}, showing an excellent match.

\section{Concluding Remarks}\label{concluding remark}
In this work, we have rigorously investigated a generalized mathematical model for density-driven flow in porous media, incorporating the coupled effects of linear adsorption and concentration-dependent viscosity. We established the well-posedness of the system by proving the existence, uniqueness, and continuous dependence on initial data of weak solutions in both two and three spatial dimensions. The physical consistency of the model was further confirmed through the establishment of maximum and minimum principles, ensuring the boundedness and non-negativity of the concentration field. Furthermore, our analytical results demonstrate that the concentration converges exponentially to zero in the $L^p$-norm for all $1 \le p \le \infty$ as $t \to \infty,$ providing a definitive asymptotic limit for the mixing process.

To validate these theoretical findings, we implemented a numerical framework based on a pressure formulation that eliminates the velocity variable to reduce computational complexity. This approach, implemented via the finite element method in \textsc{COMSOL} Multiphysics, offers a significant advantage over traditional stream-vorticity formulations by remaining applicable to three-dimensional domains. The numerical simulations closely align with the qualitative behavior predicted by our analytical results. Specifically, while increasing the density contrast intensifies finger-like structures and enhances the total kinetic energy, the results reveal a clear saturation effect. The growth in total kinetic energy is most pronounced at lower ranges of the density contrast and exhibits diminishing marginal increments as the contrast continues to rise, a nonlinear sensitivity that is consistent with our theoretical energy estimates. Furthermore, the simulations confirm that increasing the adsorption coefficient suppresses mixing efficiency; however, the rate of reduction in mixing weakens as the coefficient increases, further validating our theoretical predictions regarding the damping effect provided by the porous matrix.

These findings highlight the complex, nonlinear interplay between adsorption and hydrodynamic instability. Future work will address the analytical challenges introduced by nonlinear adsorption isotherms (e.g., Langmuir or Freundlich), which complicate the derivation of global energy bounds. Additionally, we aim to extend the current well-posedness framework to spatially heterogeneous media, where discontinuous permeability coefficients present new difficulties for regularity theory.

\vspace{0.5cm}
\noindent \textbf{Acknowledgments:} The authors are grateful to both reviewers for their valuable comments and constructive suggestions, which helped to improve the manuscript.

M.M. acknowledges partial support from the FIST program, DST, Government of India (Ref: SR/FST/MS-I/2018/22(C)). S.K. acknowledges UGC, Government of India, for providing a research fellowship (Ref. No. 1145/CSIR-UGC NET June 2019).

{\section*{Appendix: Numerical Implementation and Solver Configuration}
\label{appendix:numerical}
This section provides a detailed account of the numerical framework and solver settings utilized to obtain the results presented in this study. The coupled nonlinear system of partial differential equations (PDEs) was solved using the Finite Element Method (FEM) within the COMSOL Multiphysics environment.
\subsection*{A.1. Mesh Generation and Spatial Discretization}
The computational domain was discretized using a Free Quadrilateral mesh, which is well-suited for resolving structured flow patterns in porous media. 
\begin{itemize}
    \item \textbf{Mesh Resolution:} The mesh was calibrated specifically for Fluid Dynamics. The minimum element size was set to $0.262144 $ and the maximum size to $1.08$, with a curvature factor of $0.25$. This ensures a high-quality discretization near the boundaries where velocity and concentration gradients are steepest.
    \item \textbf{Shape Functions:} Spatial discretization was performed using Quadratic Lagrange shape functions ($P_2$) for both the pressure and the concentration fields.
    \item \textbf{Numerical Stabilization:} No artificial stabilization (e.g., streamline or crosswind diffusion) was employed. The stability of the solution was maintained through sufficient mesh refinement. 
\end{itemize}
\subsection*{A.2. Temporal Discretization and Initialization}
To handle the time-dependent nature of the reactive flow, an implicit time-stepping framework was adopted.
\begin{itemize}
    \item \textbf{Method:} The Backward Differentiation Formula (BDF) was utilized. The solver was configured with a maximum BDF order of $5$ and a minimum order of $1$.
    \item \textbf{Adaptive Stepping:} The Free time-stepping algorithm was used, allowing the solver to dynamically adjust the time step $\Delta t$ based on the local truncation error to satisfy a relative tolerance of $10^{-5}$.
    \item \textbf{Initialization:} A Consistent Initialization was performed using the Backward Euler method to ensure that the initial velocity and pressure fields satisfied the divergence-free constraint and boundary conditions at $t=0$.
\end{itemize}
\subsection*{A.3. Nonlinear Solver and Coupling Strategy}
The momentum and transport equations were solved as a Fully Coupled system, ensuring that the nonlinear interactions between viscosity, velocity, and reaction kinetics were captured simultaneously.
\begin{itemize}
    \item \textbf{Nonlinear Method:} A Constant Newton method was employed with a damping factor of $1$, representing a full Newton step.
    \item \textbf{Jacobian Update:} To optimize computational efficiency, a Minimal Jacobian Update strategy was used, where the Jacobian matrix is re-evaluated only when the convergence rate falls below a predefined threshold.
    \item \textbf{Convergence Criterion:} Iterations were terminated once the relative residual reached a strict tolerance of $10^{-5}$.
\end{itemize}
\subsection*{A.4. Linear Solver }
The linearized algebraic systems were solved using a high-performance direct solver.
\begin{itemize}
    \item \textbf{Linear Solver:} The MUMPS (Multifrontal Massively Parallel Sparse direct Solver) was utilized for its robustness in handling indefinite matrices.
    \item \textbf{Preordering and Pivoting:} The solver used an Automatic Preordering algorithm and Row Preordering to minimize the fill-in of the sparse matrix. Numerical pivoting was enabled to ensure the stability of the LU factorization.
\end{itemize}
\subsection*{Data Availability}
The COMSOL implementation files, including the model geometries and numerical simulation data, are available on Zenodo (DOI: \url{https://doi.org/10.5281/zenodo.18891896}).}

\bibliographystyle{plainnat}
\bibliography{references}

%%%%%%%%%%%%%%%%%%%%%%%%%%%%%%%%%%%%%%%%%%
%%%%%%%%%%%%%%%%%%%%%%%%%%%%%%%%%%%%%%%%%

\end{document}